\newtheorem{thm}{\textbf{Theorem}}[section]
\newtheorem{prop}[thm]{\textbf{Proposition}}
\newtheorem{lem}[thm]{\textbf{Lemma}}
\newtheorem{cor}[thm]{\textbf{Corollary}}
\theoremstyle{definition}
\newtheorem{defn}[thm]{{\rm Definition}}
\newtheorem{rem}[thm]{{\rm Remark}}
\newtheorem{ex}[thm]{{\rm Example}}
\newcommand{\olapla}{\overline\bigtriangleup}
\newcommand{\onabla}{\overline\nabla}
\newcommand{\wnabla}{\widetilde\nabla}
\newcommand{\lapla}{\bigtriangleup}
\newcommand{\nablaf}{\nabla \hspace{-9pt} \nabla \hspace{-7.6pt} \nabla}
\newcommand{\wnablaf}{\widetilde\nabla \hspace{-9pt} \nabla \hspace{-7.6pt} \nabla}
\newcommand{\onablaf}{\overline\nabla \hspace{-9pt} \nabla \hspace{-7.6pt} \nabla}
\newcommand{\p}{\phi}
\begin{document}

\title{Geometry of $k$-harmonic maps and the second variational formula of the $k$-energy}

\author{\\
Shun Maeta
}

\date{}


\maketitle 


\begin{abstract}
In \cite{jell1}, J.Eells and L. Lemaire introduced $k$-harmonic maps,
 and T. Ichiyama, J. Inoguchi and H.Urakawa \cite{tijihu1} showed the first variation formula.
 In this paper, we give the second variation formula of $k$-harmonic maps,
 and show non-existence theorem of proper $k$-harmonic maps into a Riemannian manifold of 
 non-positive curvature $(k\geq 2)$. 
 We also study $k$-harmonic maps into the product Riemannian manifold,
 and describe the ordinary differential equations of $3$-harmonic curves and $4$-harmonic curves into a sphere,
 and show their non-trivial solutions.
 \footnote{2000 Mathematics Subject Classification. primary 58E20, secondary 53C43}


\end{abstract}


\vspace{10pt}
\begin{flushleft}
{\large {\bf Introduction}}
\end{flushleft}
Theory of harmonic maps has been applied into various fields in differential geometry.
 The harmonic maps between two Riemannian manifolds are
 critical maps of the energy functional $E(\p)=\frac{1}{2}\int_M\|d\p\|^2v_g$, for smooth maps $\p:M\rightarrow N$.
 
On the other hand, in 1981, J. Eells and L. Lemaire \cite{jell1} proposed the problem to consider the {\em $k$-harmonic maps}:
 they are critical maps of the functional 
 \begin{align*}
 E_{k}(\p)=\int_Me_k(\p)v_g,\ \ (k=1,2,\dotsm),
 \end{align*}
 where $e_k(\p)=\frac{1}{2}\|(d+d^*)^k\p\|^2$ for smooth maps $\p:M\rightarrow N$.
G.Y. Jiang \cite{jg1} studied the first and second variation formulas of the bi-energy $E_2$, 
and critical maps of $E_2$ are called {\em biharmonic maps}. There have been extensive studies on biharmonic maps.
 
 Recently, in 2009, T. Ichiyama, J. Inoguchi and H. Urakawa \cite{tijihu1} studied the first variation formula of the 
$k$-energy $E_k$,
 whose critical maps are called $k$-harmonic maps.
 Harmonic maps are always $k$-harmonic maps by definition.
 In this paper, we study $k$-harmonic maps and show the second variational formula of $E_k$. 
 
In $\S \ref{preliminaries}$, we introduce notation and fundamental formulas of the tension field and 
 the $k$-stress energy tension field.

 In $\S \ref{k-harmonic}$, we recall $k$-harmonic maps, 
 and state the second variation formula of the $k$-energy $E_k$ which will be proved in $\S \ref{second}$.
 
In $\S \ref{nonposi}$, we show the non-existence theorem of proper $k$-harmonic maps into a Riemannian manifold of 
 non-positive curvature $(k\geq 2)$.

In $\S \ref{stable}$, we introduce the notion of stable $k$-harmonic maps, 
and show the non-existence theorem of non-trivial stable $k$-harmonic maps into constant sectional curvature manifolds.

 In $\S\ref{product}$, we show the reduction theorem of $k$-harmonic maps into the product spaces.

 Finally, in $\S\ref{sphere}$, we derive the necessary and sufficient condition to be $k$-harmonic maps into a sphere,
and determine the ODEs of the $3$-harmonic and $4$-harmonic curve equations into a sphere, 
and show their non-trivial solutions, respectively. 

We would like to express our gratitude to Professor Hajime Urakawa who introduced and helped to accomplish this paper.
And we also would like to express our thanks to Professor Jun-ichi Inoguchi who helped us during the period of our study.


\section{Preliminaries}\label{preliminaries}
Let $(M,g)$ be an $m$ dimensional Riemannian manifold,
 $(N,h)$ an $n$ dimensional one,
 and $\p:M\rightarrow N$, a smooth map.
 We use the following notation.
 The second fundamental form $B(\p)$
 of $\p$ is a covariant differentiation $\widetilde\nabla d\p$ of $1$-form $d\p$,
 which is a section of $\odot ^2T^*M\otimes \p^{-1}TN$.
For every $X,Y\in \Gamma (TM)$, let
 \begin{equation}
 \begin{split}
 B(X,Y)
&=(\widetilde\nabla d\p)(X,Y)=(\widetilde\nabla_X d\p)(Y)\\
&=\overline\nabla_Xd\p(Y)-d\p(\nabla_X Y)=\nabla^N_{d\p(X)}d\p(Y)-d\p(\nabla_XY). 
 \end{split}
 \end{equation}
 Here, $\nabla, \nabla^N, \overline \nabla, \widetilde \nabla$ are the induced connections on the bundles $TM$,
 $TN$, $\p^{-1}TN$ and $T^*M\otimes \p^{-1}TN$, respectively.
 
 If $M$ is compact,
 we consider critical maps of the energy functional
 \begin{align}
 E(\p)=\int_M e(\p) v_g,
 \end{align}
where $e(\p)=\frac{1}{2}\|d\p\|^2=\sum^m_{i=1}\frac{1}{2}\langle d\p(e_i),d\p(e_i)\rangle$
 which is called the {\em enegy density} of $\p$, and the inner product 
 $\langle \cdot ,\cdot \rangle$ is a Riemannian metric $h$. 
 The {\em tension \ field} $\tau(\p)$ of $\p$ is defined by
 \begin{align}
 \tau(\p)=\sum^{m}_{i=1}(\widetilde \nabla d\p)(e_i,e_i)=\sum^m_{i=1}(\widetilde \nabla _{e_i}d\p)(e_i).
 \end{align}
 Then, $\p$ is a {\em harmonic map} if $\tau(\p)=0$.
 
 The curvature tensor field $\widetilde R(\cdot, \cdot)$ of the Riemannian metric on the bundle 
 $T^*M\otimes \p^{-1}TN$ is defined as follows :
 \begin{align}
 \widetilde R(X,Y)
=\widetilde \nabla_X \widetilde \nabla_Y -\widetilde \nabla_Y\widetilde \nabla_X-\widetilde \nabla_{[X,Y]},
\ \ \ \ (X,Y\in \Gamma (TM)).
 \end{align}
 
 Furthermore, we define the following: For any $Z\in \Gamma(TM),$
 \begin{equation}
 \begin{split}
(\widetilde R(X,Y)d\p)(Z)
&=R^{\p^{-1}TN}(X,Y)d\p(Z)-d\p(R^M(X,Y)Z) \\
&=R^N(d\p(X),d\p(Y))d\p(Z)-d\p(R^M(X,Y)Z),
\end{split}
\end{equation}
where $R^M,R^N$ and $R^{\p^{-1}TN}$ are the Riemannian curvature tensor fields on 
$TM,TN$ and $\p^{-1}TN$, respectively.
 And we define
\begin{align}
\olapla
=\widetilde \nabla^* \widetilde \nabla
=-\sum^m_{k=1}(\widetilde \nabla_{e_k}\widetilde \nabla_{e_k}
-\widetilde \nabla_{\nabla_{e_k}e_k}), 
\end{align}
 is the {\em rough Laplacian}.

A section of $\odot ^2T^*M$ defined by $S_\p=e(\p)g-\p^*h$ is called the {\em stress-energy tensor field}, and 
$\p$ is said to satisfy the {\em conservation law} if ${\rm div}S_\p=0.$
 As in \cite{jell1}, 
 it holds that
$$({\rm div}S_\p)(X)=-\langle\tau(\p),d\p(X)\rangle,\ \ \  (X\in\Gamma(TM)),$$ 

We  define that $\p$ is said to satisfy the {\em $k$-conservation law} $(k\geq 2)$ ${\rm div}S_{\p}^k=0$, where the {\em $k$-stress energy tensor field} $S_{\p}^k$ 
is defined by 
\begin{align}
({\rm div} S^k_{\p})(X):=-\langle \olapla^{k-2}\tau(\p) , d\p(X)\rangle,\ \ (X\in \Gamma(TM)).
\end{align}

\section{$k$-harmonic maps}\label{k-harmonic}

J. Eells and L. Lemaire \cite{jell1} proposed the notation of $k$-harmonic maps. 
The Euler-Lagrange equations for the $k$-harmonic maps was shown by
T. Ichiyama, J. Inoguchi and H. Urakawa \cite{tijihu1}.
We first recall it briefly.
\begin{defn}[\cite{jell1}]
For $k=1,2,\dotsm$ the {\em $k$-energy functional}
 is defined by 
\begin{align*}
E_k(\phi)=\frac{1}{2}\int_M\|(d+d^* )^k\phi\|^2v_g,\ \ \phi\in C^{\infty}(M,N).
\end{align*}
Then, $\phi$ is {\em $k$-harmonic} if it is a critical point of $E_k,$ i.e., for all smooth variation $\{\phi_t\}$ of $\phi$ with $\phi_0=\phi$,
\begin{align*}
\left.\frac{d}{dt}\right|_{t=0}E_k(\phi_t)=0.
\end{align*}
We say for a $k$-harmonic map to be {\em proper} if it is not harmonic. 
\end{defn}


\vspace{10pt}

Then, the first variational formula of $E_k$ can be obtained as follows.
 First, notice the following lemma which will be used to show Theorem \ref{non posi harmonic}.
\begin{lem}[\cite{tijihu1}]
For $k=2,3,\dotsm,$ the $k$-energy functional $E_k$ is given as follows:

Case\ $1$:\ $k=2l$,\ $l=1,2,\dotsm$ $(k$ is even$)$.
$$E_{2l}(\phi)=\frac{1}{2}\int_M\|\underbrace{\overline\bigtriangleup\dotsm \overline\bigtriangleup}_{l-1}\tau(\phi)\|^2v_g.$$

Case\ $2$:\ $k=2l+1$,\ $l=1,2,\dotsm$ $(k$ is odd$)$.
$$E_{2l+1}(\phi)=\frac{1}{2}\int_M
\|\overline\nabla(\ \underbrace{\overline\bigtriangleup\dotsm \overline\bigtriangleup}_{l-1}\tau(\phi) )\|^2v_g.$$

\end{lem}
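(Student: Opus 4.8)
The plan is to prove both formulas by induction on $k$, computing the $\phi^{-1}TN$-valued form $(d+d^*)^k\phi$ explicitly and passing to norms and integrating only at the end; since $E_k$ sees this object only through $\|(d+d^*)^k\phi\|^2$, the signs produced along the way are immaterial and I carry them loosely (with $M$ assumed compact). For the base case I read $(d+d^*)\phi$ as $d\phi\in\Gamma(T^*M\otimes\phi^{-1}TN)$ and compute $(d+d^*)^2\phi=d(d\phi)+d^*(d\phi)$: the degree-raising term vanishes, $(d(d\phi))(X,Y)=(\wnabla_Xd\phi)(Y)-(\wnabla_Yd\phi)(X)=B(X,Y)-B(Y,X)=0$ by symmetry of the second fundamental form, while $d^*(d\phi)=-\sum_{i=1}^m(\wnabla_{e_i}d\phi)(e_i)=-\tau(\phi)$. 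Hence $(d+d^*)^2\phi=-\tau(\phi)$, which squared and integrated is Case $1$ with $l=1$, and one further application of $d+d^*$ gives $(d+d^*)^3\phi=-d\tau(\phi)=-\onabla\tau(\phi)$, i.e.\ Case $2$ with $l=1$.

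For the inductive step I alternate two half-steps. Even $\to$ odd: if $(d+d^*)^{2l}\phi=\pm\,\olapla^{\,l-1}\tau(\phi)$ is a section of $\phi^{-1}TN$ (a $0$-form), then $d^*$ annihilates it and $(d+d^*)^{2l+1}\phi=\pm\,d\bigl(\olapla^{\,l-1}\tau(\phi)\bigr)=\pm\,\onabla\bigl(\olapla^{\,l-1}\tau(\phi)\bigr)$, since on sections the exterior covariant derivative reduces to $\onabla$; this is Case $2$. Odd $\to$ even: writing $\sigma:=\olapla^{\,l-1}\tau(\phi)$ we have $(d+d^*)^{2l+1}\phi=\pm\,d\sigma$, and applying $d+d^*$ once more gives $(d+d^*)^{2l+2}\phi=\pm\,d^*(d\sigma)=\pm\,\onabla^*\onabla\sigma=\pm\,\olapla\sigma$, using that on $0$-forms the Hodge Laplacian carries no curvature term and hence coincides with the rough Laplacian $\olapla$; this is Case $1$ with $l$ replaced by $l+1$. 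Feeding these back into $e_k(\phi)=\frac12\|(d+d^*)^k\phi\|^2$ and integrating over $M$ yields the asserted expressions for $E_{2l}$ and $E_{2l+1}$.

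The one point that needs care — and where the precise convention of \cite{tijihu1} for the iterated operator $(d+d^*)^k$ must be invoked — is the odd $\to$ even half-step: a priori $(d+d^*)(d\sigma)=d(d\sigma)+d^*(d\sigma)$ also contains the degree-$2$ term $d(d\sigma)=R^{\phi^{-1}TN}(\cdot,\cdot)\sigma$, and one has to argue that this component does not contribute to $e_k(\phi)$ (equivalently, that at every stage the computation stays on the relevant sub-complex, so that only the trace/coexact piece survives). Once this is granted, everything else — the symmetry of $B$, the identity $d^*d=\onabla^*\onabla=\olapla$ on sections, and keeping track of how many copies of $\olapla$ have accumulated after $k$ steps — is routine bookkeeping.
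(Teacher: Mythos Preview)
The paper does not prove this lemma; it is quoted verbatim from \cite{tijihu1} with no argument supplied, so there is no ``paper's own proof'' to compare against. Your inductive computation of $(d+d^*)^k\phi$ is the natural approach and is essentially how the result is obtained in the source you would be citing.

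The one point you flag is genuine and is the \emph{only} nontrivial issue. In the odd $\to$ even step, $(d+d^*)(d\sigma)=d^*d\sigma + d(d\sigma)$ really does produce the curvature $2$-form $d(d\sigma)(X,Y)=R^{\phi^{-1}TN}(X,Y)\sigma$, and since forms of different degrees are orthogonal this term would contribute an additional $\frac12\int_M\|R^{\phi^{-1}TN}(\cdot,\cdot)\sigma\|^2\,v_g$ to the energy --- and worse, at the next step it would feed back into degree $1$ and interfere with $\onabla(\olapla\sigma)$. So you cannot simply say ``it does not contribute to $e_k(\phi)$'' without further input: either the convention in \cite{tijihu1} is that the iterate $(d+d^*)^k\phi$ is taken to mean the alternating composition $\cdots d^*\,d\,d^*\,d\,\phi$ (equivalently, one projects to the relevant degree at each stage), in which case the $2$-form never appears and your argument goes through cleanly, or one must check directly that the formulas in the lemma are in fact the \emph{definition} being adopted. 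You should state explicitly which reading you are using; once that is fixed, the rest of your bookkeeping (symmetry of $B$, $d^*d=\onabla^*\onabla=\olapla$ on sections, counting copies of $\olapla$) is correct.
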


\vspace{10pt}

Then, we have
\begin{thm}[\cite{tijihu1}]\label{kharmonic}
Let $k=2,3,\dotsm.$ Then, we have
\begin{align*}
\left.\frac{d}{dt}\right|_{t=0}E_k(\phi_t)
=-\int_M\langle \tau_k(\phi),V \rangle v_g,
\end{align*}
where
\begin{align*}
\tau _k(\phi)
:=J\left(\overline \triangle ^{(k-2)}\tau (\phi )\right)
=\overline \triangle \left (\overline \triangle ^{(k-2)}\tau (\phi )\right)
-\mathscr{R} \left( \overline \triangle ^{(k-2)}\tau (\phi ) \right),
\end{align*}
and
\begin{align*}
\overline \triangle ^{(k-2)}\tau (\phi )
=\underbrace{\overline \triangle \dotsm \overline \triangle}_{k-2} 
\tau(\phi).
\end{align*}
\end{thm}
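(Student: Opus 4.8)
The plan is to compute $\frac{d}{dt}\big|_{t=0}E_k(\phi_t)$ directly from the formula of the preceding Lemma, splitting into the even and odd cases. Write $V=\left.\frac{\partial \phi_t}{\partial t}\right|_{t=0}\in\Gamma(\phi^{-1}TN)$. In Case $1$ ($k=2l$) we have $E_{2l}(\phi_t)=\frac12\int_M\|\olapla^{(l-1)}\tau(\phi_t)\|^2 v_g$, so differentiation gives $\int_M\langle \fpt\big|_{t=0}\olapla^{(l-1)}\tau(\phi_t),\,\olapla^{(l-1)}\tau(\phi)\rangle v_g$; in Case $2$ ($k=2l+1$) the same manipulation produces $\int_M\langle \onabla\big(\fpt\big|_{t=0}\olapla^{(l-1)}\tau(\phi_t)\big),\,\onabla\olapla^{(l-1)}\tau(\phi)\rangle v_g$. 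The two ingredients needed are: (i) the commutation of $\fpt$ with the induced connection $\onabla$ and hence with $\olapla$, up to curvature terms, and (ii) the classical first-variation identity $\fpt\big|_{t=0}\tau(\phi_t)=J(V):=\olapla V-\mathscr R(V)$, where $\mathscr R(V)=\sum_i R^N(V,d\phi(e_i))d\phi(e_i)$, which is exactly the Jacobi operator appearing in the second variation of the ordinary energy.

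First I would establish the commutation rule. Using the definition of $\widetilde R$ and of $\olapla$ recalled in $\S\ref{preliminaries}$, one shows by a standard argument (introducing the variation vector field on $M\times(-\varepsilon,\varepsilon)$ and using that the Levi-Civita connection is torsion-free and that $[\fpt,e_i]=0$) that $\fpt\,\onabla_{e_i}W=\onabla_{e_i}\fpt W + R^N(d\phi(\fpt),d\phi(e_i))W$ for any section $W$ along $\phi_t$. Iterating this, $\fpt$ commutes with $\olapla$ modulo terms that all carry a factor $d\phi(\fpt)$, which vanishes at the critical point in the sense that after setting $t=0$ and integrating one may move $\fpt$ all the way inside to hit $\tau(\phi_t)$; more precisely I would argue that along the one-parameter family the error terms produced in this commutation contribute to a \emph{boundary-type} quantity or are absorbed, so that, as far as the $t=0$ derivative of $E_k$ is concerned, it suffices to use $\fpt\big|_{t=0}\olapla^{(l-1)}\tau(\phi_t)=\olapla^{(l-1)}\big(\fpt\big|_{t=0}\tau(\phi_t)\big)=\olapla^{(l-1)}J(V)$. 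Honest bookkeeping here is where one has to be careful; I would instead prefer the cleaner route of first proving the commutation identity once for $\onabla$ and then quoting it $(l-1)$ times, since $E_k$ is evaluated at $t=0$ where the relevant curvature corrections, after the integration by parts below, cancel.

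Next I would substitute and integrate by parts. In Case $1$, $\int_M\langle \olapla^{(l-1)}J(V),\,\olapla^{(l-1)}\tau(\phi)\rangle v_g$; since $\olapla=\onabla^*\onabla$ is formally self-adjoint on the compact $M$, moving all $l-1$ copies of $\olapla$ onto the second factor yields $\int_M\langle J(V),\,\olapla^{2(l-1)}\tau(\phi)\rangle v_g=\int_M\langle J(V),\,\olapla^{(k-2)}\tau(\phi)\rangle v_g$. Then using self-adjointness of $J$ (which follows from self-adjointness of $\olapla$ and the symmetry $\langle\mathscr R(X),Y\rangle=\langle X,\mathscr R(Y)\rangle$ coming from the curvature symmetries of $R^N$), this equals $\int_M\langle V,\,J(\olapla^{(k-2)}\tau(\phi))\rangle v_g=\int_M\langle V,\tau_k(\phi)\rangle v_g$, i.e. $\frac{d}{dt}\big|_{t=0}E_k(\phi_t)=-\int_M\langle\tau_k(\phi),V\rangle v_g$ after absorbing the sign into the statement's convention. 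In Case $2$, $\int_M\langle\onabla\olapla^{(l-1)}J(V),\,\onabla\olapla^{(l-1)}\tau(\phi)\rangle v_g=\int_M\langle\olapla^{(l-1)}J(V),\,\onabla^*\onabla\olapla^{(l-1)}\tau(\phi)\rangle v_g=\int_M\langle\olapla^{(l-1)}J(V),\,\olapla^{l}\tau(\phi)\rangle v_g$, and the identical self-adjointness argument reduces this to $\int_M\langle V,J(\olapla^{2l-1}\tau(\phi))\rangle v_g=\int_M\langle V,\tau_k(\phi)\rangle v_g$ since $2l-1=k-2$. Finally, the Euler–Lagrange characterization follows because $V$ is arbitrary with compact support (or $M$ compact without boundary), so the integral vanishes for all $V$ iff $\tau_k(\phi)=0$.

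The main obstacle is step two: keeping track, without error, of the curvature terms generated when commuting $\fpt$ past the iterated connection $\onabla$ and $\olapla$, and verifying that at $t=0$ all of them either cancel against each other after the integrations by parts or vanish because they are total divergences on compact $M$. Everything else — the self-adjointness of $\olapla$ and of $J$, the integration by parts, and the arithmetic $2(l-1)=k-2$ in the even case and $2l-1=k-2$ in the odd case — is routine once the commutation identity is in hand.
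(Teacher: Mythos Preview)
The paper does not supply its own proof of this theorem: Theorem~\ref{kharmonic} is quoted from \cite{tijihu1}, and the first variation reappears only at the opening of \S\ref{second} as an already-established input to the second-variation computation. So there is no argument in the paper to compare your outline against.

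Independently of that, your proposal has a real gap, located precisely where you write ``honest bookkeeping here is where one has to be careful.'' You claim that the curvature corrections generated when commuting $\onablaf_{\partial/\partial t}$ through $\olapla^{(l-1)}$ are boundary-type or cancel after integration by parts, so that effectively one may replace $\onablaf_{\partial/\partial t}\,\olapla^{(l-1)}\tau(\phi_t)$ by $\olapla^{(l-1)}\,\onablaf_{\partial/\partial t}\tau(\phi_t)$. This is false already for $k=3$. Differentiating $E_3(\phi_t)=\tfrac12\int_M\|\onabla\tau(\phi_t)\|^2 v_g$ and commuting once produces, in addition to the terms that lead to $-\int_M\langle V,\olapla^{2}\tau-\mathscr R(\olapla\tau)\rangle\,v_g$, the remainder
\[
\sum_i\int_M\big\langle R^N(V,d\phi(e_i))\tau(\phi),\ \onabla_{e_i}\tau(\phi)\big\rangle\,v_g
\ =\ -\sum_i\int_M\big\langle V,\ R^N\big(\tau(\phi),\onabla_{e_i}\tau(\phi)\big)\,d\phi(e_i)\big\rangle\,v_g,
\]
which is neither a total divergence nor zero for generic $\phi$ and $N$. (A minor side point: the variation of the tension field is $\onablaf_{\partial/\partial t}\tau(\phi_t)\big|_{t=0}=-\olapla V+\mathscr R(V)=-J(V)$, not $+J(V)$.) Thus the step you propose to skip is not skippable: for $k\ge 3$ the commutator contributions survive, and carrying them through honestly yields further curvature terms in the Euler--Lagrange operator beyond $J\big(\olapla^{(k-2)}\tau(\phi)\big)$. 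Without that bookkeeping your argument does not close, and no amount of self-adjointness of $\olapla$ or of $J$ will make the extra terms disappear.
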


\vspace{10pt}
 As a corollary of this theorem, we have
\begin{cor}[\cite{tijihu1}]
$\phi :(M,g)\rightarrow (N,h)$ is a $k$-harmonic map if
\begin{equation}\label{propkharmonic}
\begin{split}
\tau _k(\phi)
:=J\left(\overline \triangle ^{(k-2)}\tau (\phi )\right)
=\overline \triangle \left (\overline \triangle ^{(k-2)}\tau (\phi )\right)
-\mathscr{R} \left( \overline \triangle ^{(k-2)}\tau (\phi ) \right)=0.
\end{split}
\end{equation}
\end{cor}

\vspace{10pt}


\vspace{10pt}
Notice here that any harmonic map is $k$-harmonic.

We recall the results of Jiang \cite{jg1} on the second variation formula of the $2$-energy $E_2$.

\begin{thm}[\cite{jg1}]
Let $\p:M\rightarrow N$ be a $2$-harmonic map from a compact Riemannian manifold 
 $M$ into an arbitrary Riemannian manifold $N$, 
 and $\{\p_t\}$ an arbitrary $C^{\infty}$ variation of $\p$ satisfying $(\ref{jg1 3.2})$,$(\ref{jg1 3.3})$.
Then, the second variation formula of $\frac{1}{2}E_2(\p_t)$ is given as follows.
\begin{equation}\label{jg1 second variation}
\begin{split}
\left.\frac{d^2}{dt^2}\right|_{t=0}E_2(\p_t)
&=\int_M\langle \overline\nabla^*\overline\nabla V+R^N(V,d\p(e_i))d\p(e_i),\\
&\hspace{50pt}-\overline\nabla^*\overline\nabla V +R^N(V,d\p(e_i))d\p(e_i)\rangle v_g\\
&+\int_M\langle V,(\nabla^N_{d\p(e_i)}R^N)(\tau(\p),d\p(e_i))V\\
&\ \ \ +(\nabla^N_{\tau(\p)}R^N)(V,d\p(e_i))d\p(e_i)\\
&\ \ \ +R^N(V,\tau(\p))\tau(\p)\\
&\ \ \ +2R^N(V,d\p(e_k))\overline\nabla_{e_k}\tau(\p)\\
&\ \ \ +2R^N(\tau(\p),d\p(e_i))\overline\nabla_{e_i}V\rangle v_g.
\end{split}
\end{equation}
\end{thm}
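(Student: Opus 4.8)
The plan is to compute $\left.\frac{d^2}{dt^2}\right|_{t=0}E_2(\p_t)$ directly from the first variation formula of Theorem \ref{kharmonic} with $k=2$, namely $\left.\frac{d}{dt}\right|_{t=0}E_2(\p_t)=-\int_M\langle\tau_2(\p_t),V_t\rangle v_g$ where $\tau_2(\p)=\olapla\tau(\p)-\mathscr{R}(\tau(\p))$ and $\mathscr{R}(W)=\sum_i R^N(W,d\p(e_i))d\p(e_i)$. Differentiating once more in $t$ and evaluating at $t=0$, the term in which $\frac{d}{dt}$ hits $V_t$ produces $-\int_M\langle\tau_2(\p),\nabla_{\partial/\partial t}V\rangle v_g$, which vanishes because $\p$ is assumed $2$-harmonic (i.e. $\tau_2(\p)=0$); this is exactly why the hypothesis of $2$-harmonicity enters, and it is the standard reason the second variation is well-defined independently of the choice of variation. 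Hence everything reduces to computing $-\int_M\langle \left.\frac{d}{dt}\right|_{t=0}\tau_2(\p_t),V\rangle v_g$.

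First I would set up the usual variational machinery: extend the variation to a map $\Phi:M\times(-\varepsilon,\varepsilon)\to N$, write $V=d\Phi(\partial/\partial t)$, and use the fundamental commutation rule $\wnabla_{\partial/\partial t}d\p_t(X)=\onabla_X V$ together with the curvature identity $\wnabla_{\partial/\partial t}\onabla_X W-\onabla_X\wnabla_{\partial/\partial t}W=R^N(V,d\p(X))W$ for sections $W$ of $\Phi^{-1}TN$. Then I would compute $\left.\frac{d}{dt}\right|_{t=0}\tau(\p_t)$, which is the classical Jacobi-type expression $\onabla^*\onabla V-\sum_i R^N(V,d\p(e_i))d\p(e_i)$ (up to sign conventions), and feed this into the two pieces of $\tau_2$. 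For the $\olapla\tau$ piece one differentiates $\olapla=\onabla^*\onabla$ through the variation, commuting $\frac{d}{dt}$ past each $\onabla_{e_k}$ at the cost of curvature terms $R^N(V,d\p(e_k))$ acting on $\onabla_{e_k}\tau(\p)$ and on $\tau(\p)$; this is where the terms $2R^N(V,d\p(e_k))\onabla_{e_k}\tau(\p)$, $R^N(V,\tau(\p))\tau(\p)$ and the covariant-derivative-of-curvature terms $(\nabla^N_{d\p(e_i)}R^N)(\tau(\p),d\p(e_i))V$ etc. will be generated. For the $\mathscr{R}(\tau(\p))$ piece one differentiates $\sum_i R^N(\tau(\p),d\p(e_i))d\p(e_i)$ by the Leibniz rule, producing a $(\nabla^N_V R^N)$ term plus terms where $\frac{d}{dt}$ hits $\tau(\p)$ or $d\p(e_i)$; combining the $(\nabla^N_V R^N)$ contribution with the curvature terms coming from commutators and using the second Bianchi identity rewrites things in the stated form, in particular yielding the $2R^N(\tau(\p),d\p(e_i))\onabla_{e_i}V$ term. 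Finally I would integrate by parts: move $\onabla^*\onabla$ from $V$ onto the other factor so that $\int_M\langle\onabla^*\onabla(\cdots),V\rangle=\int_M\langle\cdots,\onabla^*\onabla V\rangle$, and collect like terms, using self-adjointness of $\olapla$ and skew-symmetry properties of $R^N$ to recognize the first integrand as $\langle-\onabla^*\onabla V+\mathscr{R}(V),\,-\onabla^*\onabla V+\mathscr{R}(V)\rangle$ after sign bookkeeping; note $\int_M\langle J(W),V\rangle=\int_M\langle W,J(V)\rangle$ for the Jacobi operator $J$, which is the cleanest way to see the "$|J(V)|^2$"-type leading term.

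The main obstacle will be the bookkeeping of curvature terms arising from repeatedly commuting $\frac{d}{dt}$ past the spatial covariant derivatives inside $\olapla^{1}\tau(\p)=\onabla^*\onabla\tau(\p)$: each commutation generates a term $R^N(V,d\p(e_k))(\cdot)$, and one must carefully track which argument the $R^N$ acts on, apply the second Bianchi identity to convert between $(\nabla^N_X R^N)(Y,Z)$ in its various slots, and use the harmonic-map-type simplification only where $\tau_2(\p)=0$ is legitimately available (not $\tau(\p)=0$, since $\p$ need only be $2$-harmonic). A secondary subtlety is the choice of a normal frame $\{e_i\}$ at a point with $\nabla_{e_i}e_j=0$ there, which kills many extra terms but must be handled consistently with the time derivative (the frame does not depend on $t$, so $\frac{d}{dt}$ genuinely commutes with $\nabla^M$-parallel transport). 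Once the curvature terms are organized, matching them to the five listed terms in $(\ref{jg1 second variation})$ is a routine but lengthy verification, and it recovers Jiang's formula for the bienergy as the $k=2$ case of the general second variation formula to be proved in $\S\ref{second}$.
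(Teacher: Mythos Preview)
Your approach is correct and is essentially the one the paper takes. The paper does not give an independent proof of this $k=2$ statement (it is quoted from Jiang \cite{jg1}), but its proof of the general $E_k$ second variation in \S\ref{second} specializes to $k=2$ and follows exactly your outline: differentiate the first-variation expression once more in $t$, commute $\partial/\partial t$ past the covariant derivatives inside $\olapla\tau$ using the curvature identities recorded as Lemma~\ref{jg1 lem}, drop the term paired with $\onablaf_{\partial/\partial t}dF(\partial/\partial t)$ by the $2$-harmonicity hypothesis, and then reorganize the resulting curvature terms via the first and second Bianchi identities.
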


\vspace{10pt}

Then, we show the second variation formula of the $k$-energy $E_k$.

\begin{thm}\label{th second variation of k-harmonic}
Let $\p:M\rightarrow N$ be a $k$-harmonic map from a compact Riemannian manifold $M$ into an arbitrary
 Riemannian manifold $N$, and $\{\p_t\}$ an arbitrary $C^{\infty}$ variation of $\p$ satisfying $(\ref{jg1 3.2}),(\ref{jg1 3.3})$.
 Then, the second variation formula of $\frac{1}{2}E_{k}(\p_t)$ is given as follows.
\begin{equation}\label{second variation of k-harmonic}
\begin{split}
&\left.\frac{d^2}{dt^2}E_{k}({\p_t})\right|_{t=0}\\
&=\int_M
\left\langle
\onabla^*\onabla V-R^N(V,d\p(e_i))d\p(e_i),\right.\\
&\left.\hspace{100pt}
\olapla^{k-2}\{(\onabla^*\onabla V-R^N(V,d\p(e_i))d\p(e_i) )\}
\right\rangle v_g\\
&-\int_M
\left\langle
V,
\olapla ^{k-2}\{(\nabla^N_{d\p(e_k)}R^N)(d\p(e_k),V)\tau(\p)
+R^N(\tau(\p),V)\tau(\p)\right.\\
&\hspace{50pt}+R^N(d\p(e_k),\onabla_{e_k}V)\tau(\p)
+2R^N(d\p(e_k),V)\onabla_{e_k}\tau(\p)\}\\
&\hspace{50pt}
+(\nabla^N_V R^N)(d\p(e_i),\olapla^{k-2}\tau(\p))d\p(e_i) \\
&\hspace{50pt}
+R^N(\onabla_{e_i}V,d\p(e_i)) \olapla^{k-2}\tau(\p)\\
&\left.\hspace{50pt}
-2R^N(\olapla^{k-2}\tau(\p),d\p(e_i))\onabla_{e_i}V
\right\rangle v_g.
\end{split}
\end{equation}
\end{thm}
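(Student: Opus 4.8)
The plan is to compute $\left.\frac{d^2}{dt^2}E_k(\p_t)\right|_{t=0}$ directly from the first variation formula of Theorem \ref{kharmonic}, differentiating once more in $t$ and then evaluating at $t=0$ using the $k$-harmonicity of $\p$. Recall that by the first variation formula, if we set $\widehat\tau_t := \overline\triangle^{(k-2)}\tau(\p_t)$ along the variation, then $\frac{d}{dt}E_k(\p_t) = -\int_M \langle \overline\triangle \widehat\tau_t - \mathscr{R}(\widehat\tau_t), \fpt d\p_t \rangle v_g$, where the inner product and operators are taken with respect to the bundle $\p_t^{-1}TN$. Differentiating again and setting $t=0$, the term in which $\frac{d}{dt}$ hits the second factor $\fpt d\p_t$ produces a quantity paired with $\overline\triangle^{k-2}\tau(\p) = \widehat\tau_0$; but since $\p$ is $k$-harmonic, $\tau_k(\p) = \overline\triangle\widehat\tau_0 - \mathscr{R}(\widehat\tau_0) = 0$, so that boundary-type term drops and only the term where $\frac{d}{dt}$ hits the first factor survives. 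This is the standard mechanism (as in Jiang's computation for $E_2$) that makes the second variation at a critical point well-defined independently of the choice of variation extension, and it is the reason the hypotheses $(\ref{jg1 3.2})$, $(\ref{jg1 3.3})$ on the variation are imposed.

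The main computational work is then to evaluate $\left.\frac{d}{dt}\right|_{t=0}\bigl(\overline\triangle \widehat\tau_t - \mathscr{R}(\widehat\tau_t)\bigr)$, i.e. to commute $\frac{d}{dt}$ past the $(k-2)$-fold iterated rough Laplacian, the tension field, and the curvature operator $\mathscr{R}$. The key lemma needed is the commutation formula for $\frac{d}{dt}$ (equivalently $\widetilde\nabla_{\fpt}$ on the pullback bundle over $M\times(-\epsilon,\epsilon)$) with $\olapla = \onabla^*\onabla$; each time one commutes past a $\onabla_{e_i}\onabla_{e_i}$ one picks up a curvature term $\widetilde R(\fpt, e_i)$ acting on the argument, which upon evaluation at $t=0$ becomes $R^N(V, d\p(e_i))$ applied to whatever section of $\p^{-1}TN$ sits inside. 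Applied to the innermost slot one gets the familiar identity $\left.\frac{d}{dt}\right|_{t=0}\tau(\p_t) = \onabla^*\onabla V - R^N(V, d\p(e_i))d\p(e_i) =: J_\p(V)$ (the Jacobi-type operator), so that $\left.\frac{d}{dt}\right|_{t=0}\widehat\tau_t = \olapla^{k-2} J_\p(V) + (\text{lower-order curvature corrections from commuting through each }\olapla)$. Pairing $\olapla^{k-2}J_\p(V)$ against $J_\p(V)$ via self-adjointness of $\olapla$ and of $\mathscr{R}$ gives precisely the first integral on the right-hand side of $(\ref{second variation of k-harmonic})$ after integrating by parts; the collected curvature correction terms, together with $\left.\frac{d}{dt}\right|_{t=0}\mathscr{R}(\widehat\tau_t)$ — which by the product rule contributes $(\nabla^N_V R^N)(d\p(e_i),\olapla^{k-2}\tau(\p))d\p(e_i)$ from differentiating $R^N$, $R^N(\onabla_{e_i}V, d\p(e_i))\olapla^{k-2}\tau(\p)$ from differentiating one $d\p(e_i)$, and $-2R^N(\olapla^{k-2}\tau(\p),d\p(e_i))\onabla_{e_i}V$ after integrating by parts — assemble into the second integral. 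The terms appearing inside $\olapla^{k-2}\{\cdots\}$ in the second integral (the four terms $(\nabla^N_{d\p(e_k)}R^N)(d\p(e_k),V)\tau(\p)$, $R^N(\tau(\p),V)\tau(\p)$, $R^N(d\p(e_k),\onabla_{e_k}V)\tau(\p)$, $2R^N(d\p(e_k),V)\onabla_{e_k}\tau(\p)$) are exactly the terms one gets by expanding $\left.\frac{d}{dt}\right|_{t=0}[\olapla^{k-2}\tau(\p_t)] - \olapla^{k-2}[\left.\frac{d}{dt}\right|_{t=0}\tau(\p_t)]$, i.e. the defect in commuting $\frac{d}{dt}$ through the iterated Laplacian and the tension field; one should verify that for $k=2$ this defect is empty, recovering Jiang's formula, and that the general case follows by the same bookkeeping (though in fact, as written, these appear without a further sum over the iterations, so one checks that the iterated Laplacian's extra commutators are absorbed or that the intended statement collects them under the single outer $\olapla^{k-2}$).

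The hard part will be the careful bookkeeping of curvature correction terms generated when commuting $\widetilde\nabla_{\fpt}$ through the $(k-2)$ copies of $\olapla$: a priori this produces a telescoping sum of $k-2$ terms, each of the form $\olapla^{a}\bigl(R^N(V,d\p(e_i))\cdot \olapla^{b}(\cdots)\bigr)$ with $a+b = k-3$, and reconciling this with the compact form in $(\ref{second variation of k-harmonic})$ requires either absorbing these into the stated expression or recognizing (via repeated integration by parts and self-adjointness) that they recombine. I would first nail down the single-Laplacian commutation lemma and the first variation of the tension field precisely, then do the $k=3$ case by hand to see the pattern, then induct on $k$; the self-adjointness of $\olapla$ on the compact manifold $M$ and of the Weitzenböck curvature operator $\mathscr{R}$ are the tools that let the pairing collapse to the clean two-integral form. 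Throughout, the conditions $(\ref{jg1 3.2})$, $(\ref{jg1 3.3})$ and compactness of $M$ are what license discarding divergence terms and the $\tau_k(\p)=0$ cancellation.
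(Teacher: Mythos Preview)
Your approach coincides with the paper's: start from the first variation formula, differentiate once more, use $\tau_k(\p)=0$ to discard the term paired with $\onablaf_{\fpt}dF(\fpt)$, and then compute $\onablaf_{\fpt}$ of $\olapla^{k-1}\tau(\p_t)$ and of $R^N(\olapla^{k-2}\tau(\p_t),dF(e_i))dF(e_i)$ via Jiang's commutation identities (the paper's Lemma~\ref{jg1 lem}). One point you underplay: the final step in the paper is an explicit application of both Bianchi identities --- the second Bianchi identity combines the two covariant-derivative-of-curvature terms into the single $(\nabla^N_V R^N)(d\p(e_i),\olapla^{k-2}\tau(\p))d\p(e_i)$, and the first Bianchi identity rewrites $-R^N(\olapla^{k-2}\tau(\p),\onabla_{e_i}V)d\p(e_i)$ as the last two curvature terms of the statement. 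You should make those two rewritings explicit.

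The step you single out as ``hard'' --- commuting $\onablaf_{\fpt}$ through the block $\olapla^{k-2}$ --- is precisely the step the paper does \emph{not} justify. Formulas \eqref{jg1 5.3}--\eqref{jg1 5.5} commute $\onablaf_{\fpt}$ through a \emph{single} $\olapla$ acting on $\tau$, yet in \eqref{2diff of Ek} the paper simply places $\olapla^{k-2}$ as a prefix outside the resulting bracket; likewise in \eqref{calculate of R^N} the substitution $\onablaf_{\fpt}(\olapla^{k-2}\tau)\mapsto \olapla^{k-2}(\onablaf_{\fpt}\tau)$ is made citing only \eqref{jg1 3.13}. No telescoping sum of intermediate curvature corrections $\olapla^{a}\bigl(R^N(dF(e_j),dF(\fpt))\,\olapla^{b}(\cdots)\bigr)$ is displayed or argued away. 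Your caution here is warranted, and your plan to do $k=3$ by hand to see whether those corrections genuinely cancel (or are silently absorbed into the stated form) goes beyond what the paper's written proof provides.
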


\section{Proof of the second variational formula of $k$-energy}\label{second}

In this section, we calculate the second variation formula the of $k$-energy $E_k$.

Assume that $\p:M\rightarrow N$ is a smooth map, $M$ is a compact Riemannian manifold, $N$ 
 and is a Riemannian manifold.
 First, let
\begin{align}\label{jg1 3.2}
\p_t:M\rightarrow N,\ t\in I_{\epsilon }=(-\epsilon ,\epsilon), \epsilon >0,	
\end{align}
be a $C^{\infty}$ one parameter variation of $\p$ which yields a vector field 
 $V\in \Gamma(\p^{-1}TN)$ along $\p$ in $N$ by
\begin{align}\label{jg1 3.3}
\p_0=\p,\ \left.\frac{\partial \p_t}{\partial t}\right|_{t=0}=V.
\end{align}
 The variation $\{\p_t\}$ yields a smooth map $F:M\times I_{\epsilon }\rightarrow N,$
 which is defined by 
\begin{align}
F(p,t)=\p_t(p),\ \ (p\in M,\  t \in I_{\epsilon}).
\end{align}
 
Taking the usual Euclidean metric on $I_{\epsilon}$, 
 and the product Riemannian metric on $M\times I_{\epsilon}$, 
 we denote by $\nablaf,\onablaf$ and $\wnablaf,$ the induced 
 Riemannian connection on $T(M\times I_{\epsilon}),
F^{-1}TN$ and $T^*(M\times I_{\epsilon})\otimes F^{-1}TN,$ respectively.
 If $\{e_i\}$ is an orthonormal frame field defined on a neighborhood  $U$ of $p\in M$,
 $\{e_i,\frac{\partial}{\partial t}\}$ is also an orthonormal frame field on a coordinate neighborhood 
 $U\times I_{\epsilon}$ in $M\times I_{\epsilon}$, and it holds that
 \begin{align}
 \nablaf_{\frac{\partial}{\partial t}}\frac{\partial}{\partial t}=0,\nablaf_{e_i}e_i=\nabla_{e_i}e_i,
 \nablaf_{\frac{\partial}{\partial t}}e_i=\nablaf_{e_i}\frac{\partial}{\partial t}=0.
 \end{align}
It also holds that 
\begin{equation}
\begin{split}
\frac{\partial f_t}{\partial t}=\frac{\partial F^{\alpha}}{\partial t}\frac{\partial}{\partial y^{\alpha}}
=dF(\frac{\partial}{\partial t}),
d\p_t(e_i)=dF(e_i),
\end{split}
\end{equation}
and
\begin{equation}
\begin{split}
(\wnabla_{e_i}d\p_t)(e_j)
&=\nabla^N_{d\p_t(e_i)}d\p_t(e_j)-df_t(\nabla_{e_i}e_j)
=(\wnablaf_{e_i}dF)(e_J)\\
(\wnabla_{e_k}\wnabla_{e_i}d\p_t)(e_j)
&=\nabla^N_{d\p_t(e_k)}((\wnabla_{e_i}d\p_t)(e_j))-(\wnabla_{e_i}d\p_t)(\nabla_{e_k}e_j)\\
&=(\wnablaf_{e_k}\wnablaf_{e_i}dF)(e_j),
\end{split}
\end{equation}
and so on. Then,


\begin{lem}[\cite{jg1}]\label{jg1 lem}
We have
\begin{equation}\label{jg1 3.13}
\begin{split}
&\hspace{-30pt}\onablaf_{\frac{\partial}{\partial t}}((\wnablaf_{e_i}dF)(e_i))\\
&=(\wnablaf_{e_i}\wnablaf_{e_i}dF)\left(\frac{\partial}{\partial t}\right)
-(\wnablaf_{\nabla_{e_i}e_i}dF)\left(\frac{\partial}{\partial t} \right)\\
&\hspace{70pt}
-R^N\left(dF(e_i),dF\left(\frac{\partial}{\partial t}\right)\right)dF(e_i),
\end{split}
\end{equation}

\begin{equation}\label{jg1 5.3}
\begin{split}
&\hspace{-30pt}\onablaf_{\frac{\partial}{\partial t}}\onablaf_{e_k}\onablaf_{e_k}((\wnablaf_{e_i}dF)(e_i))\\
&=\onablaf_{e_k}\onablaf_{e_k}
\left[
(\wnablaf_{e_i}\wnablaf_{e_i}dF)\left(\frac{\partial}{\partial t}\right)
-(\wnablaf_{\nabla_{e_i}e_i}dF)\left(\frac{\partial}{\partial t}\right)\right.\\
&\left.\hspace{120pt}
-R^N(dF(e_i),dF\left(\frac{\partial}{\partial t}\right)dF(e_i)
\right]\\
&\hspace{10pt}-\onablaf_{e_k}\left[
R^N(dF(e_k),dF\left(\frac{\partial}{\partial t}\right))
((\wnablaf_{e_i}dF)(e_i))\right]\\
&\hspace{10pt}-R^N(dF(e_k),dF\left(\frac{\partial}{\partial t}\right))\onablaf_{e_k}((\wnablaf_{e_i}dF)(e_i)),
\end{split}
\end{equation}
and
\begin{equation}\label{jg1 5.5}
\begin{split}
&\hspace{-30pt}\onablaf_{\frac{\partial}{\partial t}}\onablaf_{\nabla_{e_k}e_k}((\wnablaf_{e_i}dF)(e_i))\\
&=\onablaf_{\nabla_{e_k}e_k}
\left[
(\wnablaf_{e_i}\wnablaf_{e_i}dF)\left(\frac{\partial}{\partial t}\right)\right.
-(\wnablaf_{\nabla_{e_i}e_i}dF)\left(\frac{\partial}{\partial t}\right)\\
&\hspace{120pt}\left.
-R^N(dF(e_i),dF\left(\frac{\partial}{\partial t}\right) )dF(e_i)
\right]\\
&\hspace{10pt}-R^N(dF(\nabla_{e_k}e_k),dF\left(\frac{\partial}{\partial t}\right))((\wnablaf_{e_i}dF)(e_i)).
\end{split}
\end{equation}

\end{lem}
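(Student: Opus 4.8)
The three displayed identities are all commutation formulas that move the variational derivative $\onablaf_{\fpt}$ past spatial differential operators acting on the ``tension-type'' section $W:=\sum_i(\wnablaf_{e_i}dF)(e_i)$ of $F^{-1}TN$. The plan is to prove them in the order $(\ref{jg1 3.13})$, then $(\ref{jg1 5.5})$ and $(\ref{jg1 5.3})$, since the last two are obtained by applying the same commutation principle once more to the first. The tools are exactly four. First, the product-connection relations $\nablaf_{\fpt}\fpt=0$, $\nablaf_{e_i}e_i=\nabla_{e_i}e_i$, $\nablaf_{\fpt}e_i=\nablaf_{e_i}\fpt=0$, which give the vanishing brackets $[\fpt,e_i]=0$ and $[\fpt,\nabla_{e_k}e_k]=0$ (the latter since $\nabla_{e_k}e_k$ is pulled back from $M$). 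Second, the defining relation $(\wnablaf_X dF)(Y)=\onablaf_X(dF(Y))-dF(\nablaf_X Y)$. Third, the symmetry of the second fundamental form $\wnablaf dF$, which together with the vanishing connection coefficients above yields the exchange $\onablaf_{\fpt}(dF(Z))=\onablaf_Z(dF(\fpt))$ for $Z=e_i$ and $Z=\nabla_{e_i}e_i$. Fourth, the curvature identity on $F^{-1}TN$, namely $\onablaf_X\onablaf_Y-\onablaf_Y\onablaf_X-\onablaf_{[X,Y]}=R^N(dF(X),dF(Y))$.

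For $(\ref{jg1 3.13})$ I would expand $(\wnablaf_{e_i}dF)(e_i)=\onablaf_{e_i}(dF(e_i))-dF(\nabla_{e_i}e_i)$, apply $\onablaf_{\fpt}$, and commute it inward. The curvature identity (with $[\fpt,e_i]=0$) converts $\onablaf_{\fpt}\onablaf_{e_i}$ into $\onablaf_{e_i}\onablaf_{\fpt}$ plus the term $R^N(dF(\fpt),dF(e_i))dF(e_i)$, after which the exchange $\onablaf_{\fpt}(dF(e_i))=\onablaf_{e_i}(dF(\fpt))$ produces $\onablaf_{e_i}\onablaf_{e_i}(dF(\fpt))$. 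The remaining piece $\onablaf_{\fpt}(dF(\nabla_{e_i}e_i))$ is handled by the same exchange, contributing $-\onablaf_{\nabla_{e_i}e_i}(dF(\fpt))$. Reading the non-curvature terms back through the defining relation identifies them with $(\wnablaf_{e_i}\wnablaf_{e_i}dF)(\fpt)-(\wnablaf_{\nabla_{e_i}e_i}dF)(\fpt)$, and the antisymmetry $R^N(dF(\fpt),dF(e_i))=-R^N(dF(e_i),dF(\fpt))$ fixes the sign of the curvature term, giving $(\ref{jg1 3.13})$.

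For $(\ref{jg1 5.3})$ and $(\ref{jg1 5.5})$ I would commute $\onablaf_{\fpt}$ past $\onablaf_{e_k}\onablaf_{e_k}$, respectively past $\onablaf_{\nabla_{e_k}e_k}$, applied to $W$. Each passage across one $\onablaf_{e_k}$ (resp. $\onablaf_{\nabla_{e_k}e_k}$) contributes a curvature factor $R^N(dF(\fpt),dF(e_k))$ acting on the relevant intermediate section, and every associated bracket vanishes so that no $\onablaf_{[\cdot,\cdot]}$ term survives. This brings $\onablaf_{\fpt}$ all the way onto $W$; substituting $(\ref{jg1 3.13})$ for $\onablaf_{\fpt}W$ yields the first bracketed block on the right of each identity. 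For $(\ref{jg1 5.3})$ the two passages leave two curvature corrections, $\onablaf_{e_k}[R^N(dF(\fpt),dF(e_k))W]$ and $R^N(dF(\fpt),dF(e_k))\onablaf_{e_k}W$, while $(\ref{jg1 5.5})$ leaves a single one $R^N(dF(\fpt),dF(\nabla_{e_k}e_k))W$; rewriting $R^N(dF(\fpt),dF(e_k))=-R^N(dF(e_k),dF(\fpt))$ reproduces precisely the two correction terms of $(\ref{jg1 5.3})$ and the single correction term of $(\ref{jg1 5.5})$.

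The main obstacle is purely bookkeeping: tracking which slot of $R^N$ carries $dF(\fpt)$ versus $dF(e_k)$ through the repeated commutations, and verifying at each step that the relevant Lie bracket vanishes. There is no analytic difficulty here, and in particular the curvature factors are left unexpanded in the statement, so I would not differentiate $R^N$ at this stage; the $\nabla^N R^N$ terms enter only later, when $dF(\fpt)$ is specialized to $V$ and the full second variation is assembled.
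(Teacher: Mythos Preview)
Your proposal is correct. Note that the paper does not supply its own proof of this lemma: it is quoted from \cite{jg1} and used as a black box in the derivation of Theorem~\ref{th second variation of k-harmonic}. The argument you sketch---expanding $(\wnablaf_{e_i}dF)(e_i)$ via the defining relation, commuting $\onablaf_{\fpt}$ past each spatial covariant derivative using the curvature identity on $F^{-1}TN$ together with the vanishing brackets $[\fpt,e_i]=[\fpt,\nabla_{e_k}e_k]=0$, and invoking the symmetry $\onablaf_{\fpt}(dF(Z))=\onablaf_Z(dF(\fpt))$---is exactly the standard computation behind these formulas in \cite{jg1}, so there is nothing to contrast.
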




By using Lemma \ref{jg1 lem}, we prove Theorem \ref{th second variation of k-harmonic}.

\vspace{5pt}

\begin{flushleft}
Proof of Theorem \ref{th second variation of k-harmonic}.
\end{flushleft}
\vspace{-3pt}

As in \cite{tijihu1}, we have
\begin{equation}
\begin{split}
\frac{d}{dt}E_{k}(\p_t)
&=\int_M\langle dF(\frac{\partial}{\partial t}),
(\onablaf_{e_k}\onablaf_{e_k}
-\onablaf_{\nabla_{e_k}e_k)}
(\olapla^{k-2}((\wnablaf_{e_j}dF)(e_j)))\rangle v_g\\
&+\int_M\langle R^N (dF (\frac{\partial}{\partial t}),dF(e_i))dF(e_i),
\olapla^{k-2}((\wnablaf_{e_j}dF)(e_j))\rangle v_g.
\end{split}
\end{equation}
Therefore, we have
\begin{equation}
\begin{split}
\frac{d^2}{dt^2}E_k({f_t})
&=\int_M \langle \onablaf_{\frac{\partial}{\partial t}}dF(\frac{\partial}{\partial t}),
-\olapla(\olapla^{k-2}((\wnablaf_{e_i}dF)(e_i))) \\
&\hspace{60pt}+R^N (\olapla^{k-2}((\wnablaf_{e_i}dF)(e_i)),dF(e_j))dF(e_j)
\rangle v_g\\
&+\int_M \langle dF(\frac{\partial}{\partial t}),
\onablaf_{\frac{\partial}{\partial t}}[-\olapla(\olapla^{k-2}((\wnablaf_{e_j}dF)(e_j))) \\
&\hspace{60pt}+ R^N (\olapla^{k-2}((\wnablaf_{e_j}dF)(e_j)),dF(e_i))dF(e_i)]
\rangle v_g.
\end{split}
\end{equation}

\vspace{0.50pt}

Then, we calculate 
$\onablaf_{\frac{\partial}{\partial t}}\left[R^N(\olapla ^{k-2}((\wnablaf_{e_j}dF)(e_j)),dF(e_i))dF(e_i)\right].$
Using $(\ref{jg1 3.13})$ and second Bianchi's identity, we have
\begin{equation}\label{calculate of R^N}
\begin{split}
&\hspace{-30pt}\onablaf_{\frac{\partial}{\partial t}}\left[R^N(\olapla ^{k-2}((\wnablaf_{e_j}dF)(e_j)),dF(e_i))dF(e_i)\right]\\
&=(\nabla^N_{\olapla ^{k-2}((\wnabla_{e_j}dF)(e_j))}R^N)(dF(\frac{\partial}{\partial t}),dF(e_i))dF(e_i)\\
&+(\nabla^N_{dF(e_i)}R^N)(\olapla ^{k-2}((\wnablaf_{e_j}dF)(e_j)),dF(\frac{\partial}{\partial t}))dF(e_i)\\
&+R^N(\olapla^{k-2}\{(\wnablaf_{e_j}\wnablaf_{e_j}dF)\left(\frac{\partial}{\partial t}\right)
-(\wnablaf_{\nabla_{e_j}e_j}dF)\left(\frac{\partial}{\partial t} \right)\\
&\hspace{50pt}+R^N(dF(e_j),dF\left(\frac{\partial}{\partial t}\right))dF(e_j)\},dF(e_i))dF(e_i)\\
&+R^N(\olapla^{k-2} ((\wnablaf _{e_j}dF)(e_j)),(\wnablaf_{e_i}dF)(\frac{\partial}{\partial t}))dF(e_i)\\
&+R^N(\olapla^{k-2} ((\wnablaf_{e_j}dF)(e_j)),dF(e_i))(\wnablaf_{e_i}dF)(\frac{\partial}{\partial t}).
\end{split}
\end{equation}
Using $(\ref{jg1 3.13}),(\ref{jg1 5.3}), (\ref{jg1 5.5})\ and\ (\ref{calculate of R^N})$, we have

\begin{equation}\label{2diff of Ek}
\begin{split}
&\hspace{-30pt}\frac{d^2}{dt^2}E_{k}({f_t})\\
&\hspace{-10pt}
=\int_M \langle \onablaf_{\frac{\partial}{\partial t}}dF(\frac{\partial}{\partial t}),
-\olapla(\olapla^{k-2}((\wnablaf_{e_j}dF)(e_j))) \\
&\hspace{80pt}
+R^N (\olapla^{k-2}((\wnablaf_{e_j}dF)(e_j)),dF(e_i))dF(e_i)
\rangle v_g\\
&+\int_M
\left\langle
dF(\frac{\partial}{\partial t}),\right.\\
&\hspace{40pt}
\olapla^{k-2}
\left\{
\onablaf_{e_k}\onablaf_{e_k}
\left[
(\wnablaf_{e_j}\wnablaf_{e_j}dF)\left(\frac{\partial}{\partial t}\right)
-(\wnablaf_{\nabla_{e_j}e_j}dF)\left(\frac{\partial}{\partial t}\right)\right.\right.\\
&
\left.\hspace{160pt}
-R^N(dF(e_j),dF\left(\frac{\partial}{\partial t}\right))dF(e_j)
\right ]\\
&\hspace{40pt}-\onablaf_{e_k}\left[
R^N(dF(e_k),dF\left(\frac{\partial}{\partial t}\right))
((\wnablaf_{e_j}dF)(e_j))\right ]\\
&\hspace{40pt}
-R^N(dF(e_k),dF\left(\frac{\partial}{\partial t}\right))\onablaf_{e_k}((\wnablaf_{e_j}dF)(e_j))\\
&\hspace{40pt}
-\onablaf_{\nabla_{e_k}e_k}
\left[
(\wnablaf_{e_j}\wnablaf_{e_j}dF)\left(\frac{\partial}{\partial t}\right)
-(\wnablaf_{\nabla_{e_j}e_j}dF)\left(\frac{\partial}{\partial t}\right)\right.\\
&\left.\hspace{140pt}
-R^N(dF(e_j),dF\left(\frac{\partial}{\partial t}\right) )dF(e_j)
\right ]\\
&\hspace{40pt}
\left.-R^N(dF(\nabla_{e_k}e_k),dF\left(\frac{\partial}{\partial t}\right))((\wnablaf_{e_j}dF)(e_j))
\right\}\\
&\hspace{40pt}
+(\nabla^N_{\olapla ^{k-2}((\wnabla_{e_j}dF)(e_j))}R^N)(dF(\frac{\partial}{\partial t}),dF(e_i))dF(e_i)\\
&\hspace{40pt}
+(\nabla^N_{dF(e_i)}R^N)(\olapla ^{k-2}((\wnablaf_{e_j}dF)(e_j)),dF(\frac{\partial}{\partial t}))dF(e_i)\\
&\hspace{40pt}
+R^N(\olapla^{k-2}\{(\wnablaf_{e_j}\wnablaf_{e_j}dF)\left(\frac{\partial}{\partial t}\right)
-(\wnablaf_{\nabla_{e_j}e_j}dF)\left(\frac{\partial}{\partial t} \right)\\
&\hspace{50pt}-R^N(dF(e_j),dF\left(\frac{\partial}{\partial t}\right))dF(e_j)\},dF(e_i))dF(e_i)\\
&\hspace{40pt}
+R^N(\olapla^{k-2} ((\wnablaf _{e_j}dF)(e_j)),(\wnablaf_{e_i}dF)(\frac{\partial}{\partial t}))dF(e_i)\\
&\hspace{40pt}
\left.+R^N(\olapla^{k-2} ((\wnablaf_{e_j}dF)(e_j)),dF(e_i))(\wnablaf_{e_i}dF)(\frac{\partial}{\partial t})
\right\rangle v_g.
\end{split}
\end{equation}
Putting $t=0$ in $(\ref{2diff of Ek})$, the first term of the RHS of $(\ref{2diff of Ek})$ vanishes since $\p$ is $k$-harmonic.

Therefore, we have
\begin{equation}
\begin{split}
&\left.\frac{d^2}{dt^2}E_{k}({\p_t})\right|_{t=0}\\
&=\int_M
\left\langle
-\onabla^*\onabla V-R^N(d\p(e_i),V)d\p(e_i),\right.\\
&\left.\hspace{80pt}
\olapla^{k-2}\{(-\onabla^*\onabla V-R^N(d\p(e_j),V)d\p(e_j))\}
\right\rangle v_g\\
&-\int_M
\left\langle
V,
\olapla^{k-2}\{(\nabla^N_{d\p(e_k)}R^N)(d\p(e_k),V)\tau(\p)
+R^N(\tau(\p),V)\tau(\p)\right.\\
&\hspace{50pt}+R^N(d\p(e_k),\onabla_{e_k}V)\tau(\p)
+2R^N(d\p(e_k),V)\onabla_{e_k}\tau(\p)\}\\
&\hspace{50pt}
-(\nabla^N_{\olapla^{k-2}\tau(\p)}R^N)(V,d\p(e_i))d\p(e_i)\\
&\hspace{50pt}
-(\nabla^N_{d\p(e_i)}R^N)(\olapla^{k-2}\tau(\p),V)d\p(e_i)\\
&\hspace{50pt}
-R^N(\olapla^{k-2}\tau(\p),\onabla_{e_i}V)d\p(e_i)\\
&\hspace{50pt}
\left.-R^N(\olapla^{k-2}\tau(\p),d\p(e_i))\onabla_{e_i}V
\right\rangle v_g.
\end{split}
\end{equation}
By the Bianchi's second identity, 
\begin{equation}
\begin{split}
&-(\nabla^N_{\olapla^{k-2}\tau(\p)}R^N)(V,d\p(e_i))
-(\nabla^N_{d\p(e_i)}R^N)(\olapla^{k-2}\tau(\p),V)\\
&=(\nabla^N_{V}R^N)(d\p(e_i),\olapla^{k-2}\tau(\p)),
\end{split}
\end{equation}
and Bianchi's first identity
\begin{equation}
\begin{split}
&-R^N(\olapla^{k-2}\tau(\p),\onabla_{e_i}V)d\p(e_i)\\
&=R^N(\onabla_{e_i}V,d\p(e_i))\olapla^{k-2}\tau(\p)
-R^N(\olapla^{k-2}\tau(\p),d\p(e_i))\onabla_{e_i}V,
\end{split}
\end{equation}
we have Theorem \ref{th second variation of k-harmonic}.
$\qed$

\section{$k$-harmonic maps into Riemannian manifold of non positive sectional curvature}\label{nonposi}

Jiang \cite{jg1} showed the following proposition.


\begin{prop}[\cite{jg1}]\label{jg1 non posi harmonic}
Assume that $M$ is compact and $N$ is non positive curvature,
 $i.e.,$ Riemannian curvature of $N$ ${\rm Riem^N}$ $\leq 0.$
 Then, every $2$-harmonic map $\phi:M\rightarrow N$ is harmonic.
\end{prop}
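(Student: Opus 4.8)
The plan is to deduce the statement from the biharmonic equation by integration by parts, without invoking the second variation formula. Since $\phi$ is $2$-harmonic, $(\ref{propkharmonic})$ with $k=2$ reads
\[
\olapla\,\tau(\phi)=\mathscr{R}(\tau(\phi)),\qquad\mathscr{R}(W):=\sum_{i=1}^m R^N(W,d\phi(e_i))d\phi(e_i),
\]
the rough Laplacian on $\Gamma(\phi^{-1}TN)$ being $\olapla=\onabla^*\onabla$. First I would pair this identity with $\tau(\phi)$ in the metric $h$ and integrate over the compact $M$; one integration by parts on the left (using $\olapla=\onabla^*\onabla$) gives
\[
\int_M\|\onabla\tau(\phi)\|^2\,v_g=\int_M\langle\mathscr{R}(\tau(\phi)),\tau(\phi)\rangle\,v_g .
\]

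Next I would show the right-hand side is $\le0$ pointwise. By the symmetries of $R^N$, $\langle R^N(\tau(\phi),d\phi(e_i))d\phi(e_i),\tau(\phi)\rangle=\langle R^N(d\phi(e_i),\tau(\phi))\tau(\phi),d\phi(e_i)\rangle$, which equals the sectional curvature of the $2$-plane spanned by $d\phi(e_i)$ and $\tau(\phi)$ times the nonnegative Gram determinant $\|d\phi(e_i)\|^2\|\tau(\phi)\|^2-\langle d\phi(e_i),\tau(\phi)\rangle^2$. Since ${\rm Riem}^N\le0$, every summand is $\le0$, so $\langle\mathscr{R}(\tau(\phi)),\tau(\phi)\rangle\le0$. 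The left-hand side being $\ge0$, both integrals must vanish; in particular $\onabla\tau(\phi)=0$, i.e. $\tau(\phi)$ is parallel along $\phi$.

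Finally I would upgrade ``$\tau(\phi)$ parallel'' to ``$\tau(\phi)=0$'' --- the only genuinely separate point --- by means of the $1$-form $\alpha$ on $M$ defined by $\alpha(X)=\langle d\phi(X),\tau(\phi)\rangle$. Expanding its codifferential and using $\sum_i(\wnabla_{e_i}d\phi)(e_i)=\tau(\phi)$,
\[
-\delta\alpha=\sum_{i=1}^m\{\,e_i(\alpha(e_i))-\alpha(\nabla_{e_i}e_i)\,\}=\|\tau(\phi)\|^2+\langle d\phi,\onabla\tau(\phi)\rangle=\|\tau(\phi)\|^2 ,
\]
the last term vanishing by the previous step. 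Integrating over the closed $M$ (so that $\int_M\delta\alpha\,v_g=0$) yields $\int_M\|\tau(\phi)\|^2\,v_g=0$, hence $\tau(\phi)\equiv0$ and $\phi$ is harmonic. The computations are routine; the two things to be careful about are the curvature sign convention --- so that ${\rm Riem}^N\le0$ really forces $\langle\mathscr{R}(\tau(\phi)),\tau(\phi)\rangle\le0$ --- and the fact that parallelism of $\tau(\phi)$ by itself does not yield harmonicity, the divergence identity for $\alpha$ being what actually kills $\tau(\phi)$. (Replacing $\tau(\phi)$ by $\olapla^{k-2}\tau(\phi)$ throughout should give the non-existence theorem for general $k\ge2$ announced in the introduction.)
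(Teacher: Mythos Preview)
Your argument is correct and is essentially the $k=2$ specialization of the paper's own proof of Theorem~\ref{non posi harmonic} (Proposition~\ref{jg1 non posi harmonic} itself is only cited, not reproved). The paper packages the first part as the Bochner identity $\lapla e_2(\phi)=\|\onabla\tau(\phi)\|^2-\langle\olapla\tau(\phi),\tau(\phi)\rangle$ together with Green's theorem, while you pair $\olapla\tau(\phi)=\mathscr{R}(\tau(\phi))$ with $\tau(\phi)$ and integrate by parts; these are the same computation. For the final step ``$\onabla\tau(\phi)=0\Rightarrow\tau(\phi)=0$'' the paper appeals to its Lemma~\ref{olapla harmonic}, whereas you give directly the divergence identity for $\alpha(X)=\langle d\phi(X),\tau(\phi)\rangle$; your version makes explicit exactly the ingredient that the iteration via Lemmas~\ref{olapla1}--\ref{olapla2} needs at the base level, so in that respect your write-up is a bit more self-contained.
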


\vspace{10pt}

In this section, we generalize this proposition to every $k$-harmonic map. Namely, we have

\vspace{10pt}

\begin{thm}\label{non posi harmonic}
Assume that $M$ is compact and $N$ is non positive curvature,
 ${\rm Riem^N}$ $\leq 0.$
 Then, every $k$-harmonic map $\phi:M\rightarrow N$ is harmonic.
\end{thm}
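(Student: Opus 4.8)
The plan is to imitate Jiang's proof for the biharmonic case, using the integral identity obtained by pairing the $k$-harmonic equation $\tau_k(\phi)=0$ with a well-chosen vector field and integrating over the compact manifold $M$. Recall from Theorem \ref{kharmonic} that $\phi$ is $k$-harmonic exactly when
$\overline\triangle\bigl(\olapla^{k-2}\tau(\phi)\bigr)=\mathscr{R}\bigl(\olapla^{k-2}\tau(\phi)\bigr)$,
where $\mathscr{R}(W)=\sum_i R^N(W,d\phi(e_i))d\phi(e_i)$. The idea is to test this equation against $\olapla^{k-3}\tau(\phi)$ (and, more generally, to iterate) and use that $\olapla=\onabla^*\onabla$ is a nonnegative self-adjoint operator on $\Gamma(\phi^{-1}TN)$ with respect to the $L^2$ inner product, together with the curvature sign assumption $\langle\mathscr{R}(W),W\rangle=\sum_i\langle R^N(W,d\phi(e_i))d\phi(e_i),W\rangle\le 0$, which holds because $\mathrm{Riem}^N\le 0$.

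The key steps, in order, are as follows. First, set $W:=\olapla^{k-2}\tau(\phi)$ and write the $k$-harmonic equation as $\olapla W=\mathscr{R}(W)$. Second, take the $L^2$ pairing of both sides with $\olapla^{k-3}\tau(\phi)$ if $k\ge 3$; more robustly, I would pair $\olapla W=\mathscr R(W)$ successively with $\olapla^{j}\tau(\phi)$ for appropriate $j$ and use self-adjointness of $\olapla$ to move powers of $\olapla$ across the inner product. The cleanest route is: integrate $\langle \olapla W, \olapla^{k-2}\tau(\phi)\rangle = \langle \mathscr R(W),\olapla^{k-2}\tau(\phi)\rangle$ is not quite it — instead pair $\olapla W = \mathscr R(W)$ with $W$ itself after first peeling: since $W=\olapla(\olapla^{k-3}\tau(\phi))$ for $k\ge 3$, one gets $\int_M\langle \olapla W, \olapla^{k-3}\tau(\phi)\rangle v_g=\int_M\langle \onabla(\olapla^{k-3}\tau(\phi)),\onabla W\rangle v_g$ and so on, telescoping down. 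Carrying this through produces, for even $k=2l$, the identity $\int_M\|\olapla^{l-1}\tau(\phi)\|^2 v_g=\int_M\langle \mathscr R(W),\,\olapla^{k-2}\tau(\phi)\rangle v_g$ with the left side manifestly $\ge 0$; and one separately shows the right side equals $\int_M\langle \mathscr R(\olapla^{l-1}\tau(\phi)),\olapla^{l-1}\tau(\phi)\rangle v_g \le 0$ after moving $l-1$ copies of $\olapla$ across — here one needs that $\olapla$ commutes appropriately or at least that the pairing rearranges correctly, which is where care is required. Third, conclude that both sides vanish, hence $\olapla^{l-1}\tau(\phi)=0$ in the even case (resp. $\onabla(\olapla^{l-1}\tau(\phi))=0$ in the odd case). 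Fourth, a bootstrap: from $\olapla^{l-1}\tau(\phi)=0$ one deduces, by pairing with $\olapla^{l-2}\tau(\phi)$ and using $\int_M\|\onabla(\olapla^{l-2}\tau(\phi))\|^2v_g=\int_M\langle\olapla(\olapla^{l-2}\tau(\phi)),\olapla^{l-2}\tau(\phi)\rangle v_g=0$, that $\onabla(\olapla^{l-2}\tau(\phi))=0$, hence $\olapla^{l-2}\tau(\phi)$ is parallel; iterating downward through the tower of Laplacian powers forces $\tau(\phi)$ itself to be parallel, and then one more pairing, $\int_M\|\onabla\tau(\phi)\|^2v_g=\int_M\langle\olapla\tau(\phi),\tau(\phi)\rangle v_g$ combined with the vanishing just obtained, yields $\onabla\tau(\phi)=0$ and ultimately, via the $k=2$ argument of Proposition \ref{jg1 non posi harmonic} applied to $\tau(\phi)$, that $\tau(\phi)=0$.

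The main obstacle I anticipate is the bookkeeping in the telescoping/integration-by-parts step: one must rearrange the $L^2$ pairing $\int_M\langle\olapla^{k-1}\tau(\phi),\tau(\phi)\rangle v_g$ into a sum of squared norms $\int_M\|\olapla^{j}\tau(\phi)\|^2$ or $\int_M\|\onabla\olapla^{j}\tau(\phi)\|^2$, keeping the curvature term on the other side with the correct sign, and then separately simplify $\int_M\langle\mathscr R(\olapla^{k-2}\tau(\phi)),\olapla^{k-2}\tau(\phi)\rangle v_g$ — this last identity requires moving half the Laplacian powers onto each slot of the curvature pairing, which is only legitimate after another round of integration by parts and uses self-adjointness of $\olapla$ crucially. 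Once the signs line up (nonnegative $=$ nonpositive), everything collapses and the downward bootstrap is routine. A secondary subtlety is treating the even and odd cases of $k$ uniformly; I would handle them in parallel using the two formulas for $E_k$ from the Lemma of Ichiyama–Inoguchi–Urakawa.
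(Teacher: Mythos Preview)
You actually write down the correct pairing and then discard it. The identity you label ``not quite it'' --- namely
\[
\int_M\bigl\langle \olapla W,\,W\bigr\rangle\,v_g
=\int_M\bigl\langle \mathscr{R}(W),\,W\bigr\rangle\,v_g,
\qquad W:=\olapla^{\,k-2}\tau(\phi),
\]
is exactly what the paper uses, and it finishes the first step in one line: the left side is $\int_M\|\onabla W\|^2\,v_g\ge 0$, the right side is $\sum_i\int_M\langle R^N(W,d\phi(e_i))d\phi(e_i),W\rangle\,v_g\le 0$ by $\mathrm{Riem}^N\le 0$, hence both vanish and $\onabla_{e_i}\olapla^{\,k-2}\tau(\phi)=0$ for all $i$. (Equivalently, the paper packages this as $\lapla e_{2(k-1)}(\phi)=\|\onabla W\|^2-\langle\mathscr{R}(W),W\rangle\ge 0$ and integrates.) No telescoping is needed and no parity split between even and odd $k$ enters at this stage.

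The alternative route you pursue has a genuine gap. Once you pair with anything other than $W$ itself --- e.g.\ with $\olapla^{\,k-3}\tau(\phi)$ or with $\tau(\phi)$ --- the curvature side becomes $\int_M\langle\mathscr{R}(\olapla^{\,k-2}\tau),\,\olapla^{\,j}\tau\rangle\,v_g$ with two different slots, and there is no way to ``move $l-1$ copies of $\olapla$ across'' into $\mathscr{R}$: the operator $\mathscr{R}$ is a zeroth-order curvature endomorphism that does \emph{not} commute with $\olapla$, so $\int\langle\mathscr{R}(\olapla^{\,2(l-1)}\tau),\tau\rangle$ is not equal to $\int\langle\mathscr{R}(\olapla^{\,l-1}\tau),\olapla^{\,l-1}\tau\rangle$ in general. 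Your own caveat (``which is where care is required'') flags precisely the step that fails.

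Your bootstrap (step four) is essentially correct and matches the paper's Lemmas: from $\onabla_{e_i}\olapla^{\,l-1}\tau(\phi)=0$ one gets $\olapla^{\,l}\tau(\phi)=0$ by a divergence argument, and from $\olapla^{\,l}\tau(\phi)=0$ one gets $\onabla_{e_i}\olapla^{\,l-1}\tau(\phi)=0$ by computing $\lapla e_{2l}(\phi)$; iterating down yields $\tau(\phi)=0$. So the only repair needed is to replace the telescoping attempt by the direct pairing with $W$ that you already wrote down.
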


\vspace{10pt}
To prove this theorem, we show Theorem \ref{olapla harmonic}. First, we show the following two lemmas.


\begin{lem}\label{olapla1}
Let $l=1,2,\dotsm$. If \ $\onabla_{e_i}\olapla^{(l-1)} \tau(\p)=0\ (i=1,\dotsm,m)$, then $$\olapla^l \tau(\p)=0.$$
\end{lem}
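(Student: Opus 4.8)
The plan is to prove the statement by a simple integration by parts (Bochner-type) argument on the compact manifold $M$. Write $T := \olapla^{(l-1)}\tau(\p) \in \Gamma(\p^{-1}TN)$, so the hypothesis is $\onabla_{e_i} T = 0$ for every $i$, i.e. $\onabla T \equiv 0$. I want to conclude $\olapla^l\tau(\p) = \olapla T = 0$.

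First I would recall that $\olapla = \onabla^*\onabla$ is the rough Laplacian on sections of $\p^{-1}TN$, and that for any section $W$ one has the $L^2$-identity
\begin{align*}
\int_M \langle \olapla W, W\rangle\, v_g = \int_M \langle \onabla W, \onabla W\rangle\, v_g,
\end{align*}
valid because $M$ is compact (integrate the divergence of the $1$-form $X \mapsto \langle \onabla_X W, W\rangle$). Apply this with $W = T$: the hypothesis $\onabla T = 0$ forces $\int_M \langle \olapla T, T\rangle\, v_g = 0$. This alone does not give $\olapla T = 0$ pointwise, so a second step is needed.

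The cleaner route: since $\onabla_{e_i} T = 0$ for all $i$ on any local orthonormal frame, we get directly from the definition $\olapla T = -\sum_k(\onabla_{e_k}\onabla_{e_k} T - \onabla_{\nabla_{e_k}e_k} T)$ that each term $\onabla_{e_k}\onabla_{e_k}T = \onabla_{e_k}(\onabla_{e_k}T) = \onabla_{e_k}(0) = 0$ and $\onabla_{\nabla_{e_k}e_k}T = 0$, hence $\olapla T = 0$ pointwise, and so $\olapla^l\tau(\p) = \olapla T = 0$. This is in fact immediate and requires neither compactness nor integration — the condition $\onabla T = 0$ (as a section of $T^*M\otimes\p^{-1}TN$, vanishing identically) literally says all covariant derivatives of $T$ vanish, so any second covariant derivative and hence the rough Laplacian of $T$ vanishes. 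I would present it this way, phrasing it carefully: from $\onabla_X T = 0$ for all $X\in\Gamma(TM)$ one has $\onabla_X\onabla_Y T = \onabla_X(0) = 0$ for all $X,Y$, whence $\olapla T = 0$.

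I do not expect a genuine obstacle here; the only point of care is making sure the hypothesis is read as "$\onabla T$ vanishes as a section" (equivalently $\onabla_X T = 0$ for every vector field $X$, not merely for the frame vectors at a point), which then trivially propagates to the second-order operator. If one instead wants to lean on compactness, the Bochner identity above gives $\|\onabla T\|_{L^2} = 0$ only as a consequence, not a cause; so the direct pointwise argument is the right one, and I would use it.
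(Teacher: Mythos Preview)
Your argument is correct, and in fact simpler than the paper's. You observe that the hypothesis says the section $T=\olapla^{(l-1)}\tau(\p)$ is parallel, i.e.\ $\onabla_X T=0$ for every $X\in\Gamma(TM)$; then both $\onabla_{e_k}\onabla_{e_k}T$ and $\onabla_{\nabla_{e_k}e_k}T$ vanish pointwise, so $\olapla T=0$ immediately, with no need for compactness.

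The paper instead runs an integral argument: it introduces the vector field
\[
X_\p=\sum_j\bigl\langle -\onabla_{e_j}\olapla^{(l-1)}\tau(\p),\ \olapla^l\tau(\p)\bigr\rangle e_j,
\]
computes ${\rm div}(X_\p)=\|\olapla^l\tau(\p)\|^2$ (the cross term dropping out by the hypothesis), and integrates over the compact $M$ to get $\int_M\|\olapla^l\tau(\p)\|^2\,v_g=0$. That route relies on compactness of $M$ and is essentially one half of the $L^2$-adjoint identity you mention. Your pointwise argument is more elementary and more general (it works without any global assumption on $M$); the paper's version is consistent with the integration-by-parts framework used throughout \S\ref{nonposi}, but for this particular lemma it is unnecessarily heavy.
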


\vspace{10pt}

\begin{proof}
Indeed, we can define a global vector field $X_{\p}\in \Gamma(TM)$ defined by
\begin{align}
X_{\phi}=\sum^m_{j=1}\langle-\overline\nabla_{e_j}\overline\bigtriangleup^{(l-1)}\tau(\phi),
\overline\bigtriangleup^l\tau(\phi)\rangle e_j.
\end{align}
 Then, the divergence of $X_{\p}$ is given as 
\begin{align*}
{\rm div(X_{\phi})}
&=\langle\overline\bigtriangleup^l\tau(\phi),\overline\bigtriangleup^l\tau(\phi)\rangle
+\sum^m_{j=1}\langle-\overline\nabla_{e_j}\overline\bigtriangleup^{(l-1)}\tau(\phi),
\overline\nabla_{e_j}\overline\bigtriangleup^{l}\tau(\phi)\rangle\\
&=\langle\overline\bigtriangleup^l\tau(\phi),\overline\bigtriangleup^l\tau(\phi)\rangle,
\end{align*}
by the assumption. Integrating this over $M$, we have
$$0=\int_M{\rm div}(X_{\phi})v_g
=\int_M\langle\overline\bigtriangleup^l\tau(\phi),\overline\bigtriangleup^l\tau(\phi)\rangle v_g,$$
which implies $\overline\bigtriangleup^l\tau(\phi)=0.$
\end{proof}

\vspace{10pt}

\begin{lem}\label{olapla2}
Let $l=1,2,\dotsm$. If\  $\olapla^l \tau(\p)=0$, then $$\onabla_{e_i}\olapla^{(l-1)}\tau(\p)=0,\ \ (i=1,\dotsm, m).$$
\end{lem}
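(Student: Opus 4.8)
The plan is to mirror the argument of Lemma~\ref{olapla1}, this time running the integration by parts on a single auxiliary vector field built from $W:=\olapla^{(l-1)}\tau(\p)$ and exploiting the hypothesis in the form $\onabla^*\onabla W=\olapla^{l}\tau(\p)=0$, where $\olapla$ acts on sections of $\p^{-1}TN$ as the rough Laplacian $\onabla^*\onabla$.

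First I would introduce the vector field $Y_\p\in\Gamma(TM)$ defined by
$$Y_\p=\sum_{j=1}^m\big\langle\onabla_{e_j}\olapla^{(l-1)}\tau(\p),\ \olapla^{(l-1)}\tau(\p)\big\rangle\,e_j ,$$
noting that it is globally well defined, being the metric dual of the $1$-form $X\mapsto\langle\onabla_X W,W\rangle$ (equivalently, the divergence computed below is frame independent). Working at a point of $M$ where $\nabla_{e_j}e_j=0$ and using $\onabla^*\onabla=-\sum_j(\onabla_{e_j}\onabla_{e_j}-\onabla_{\nabla_{e_j}e_j})$, one obtains
$${\rm div}(Y_\p)=\sum_{j=1}^m\langle\onabla_{e_j}\onabla_{e_j}W,W\rangle+\sum_{j=1}^m\|\onabla_{e_j}W\|^2=-\langle\onabla^*\onabla W,W\rangle+\|\onabla W\|^2 .$$
Since $\onabla^*\onabla W=\olapla^{l}\tau(\p)=0$ by assumption, this collapses to ${\rm div}(Y_\p)=\|\onabla W\|^2\geq 0$.

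Finally I would integrate over the compact manifold $M$: the divergence theorem gives
$$\int_M\big\|\onabla\,\olapla^{(l-1)}\tau(\p)\big\|^2\,v_g=\int_M{\rm div}(Y_\p)\,v_g=0 ,$$
whence $\onabla_{e_i}\olapla^{(l-1)}\tau(\p)=0$ for every $i=1,\dotsm,m$, as claimed. I do not expect any genuine obstacle: the only points deserving a line of justification are the global well-definedness of $Y_\p$ and the (harmless) use of a geodesic frame to drop the $\nabla_{e_j}e_j$ terms in the intermediate computation, exactly as in the proof of Lemma~\ref{olapla1}. Combined with Lemma~\ref{olapla1}, this shows the two conditions $\olapla^{l}\tau(\p)=0$ and $\onabla_{e_i}\olapla^{(l-1)}\tau(\p)=0$ are equivalent, which is the form in which it should feed into Theorem~\ref{olapla harmonic}.
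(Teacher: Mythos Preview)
Your argument is correct and is essentially the paper's own proof in a slightly different packaging: your vector field $Y_\p$ is precisely $\mathrm{grad}\,e_{2l}(\p)$ (since $e_j\!\cdot\!\tfrac12\|W\|^2=\langle\onabla_{e_j}W,W\rangle$), so your divergence identity ${\rm div}(Y_\p)=-\langle\onabla^*\onabla W,W\rangle+\|\onabla W\|^2$ is exactly the paper's computation of $\lapla e_{2l}(\p)$, and integrating via the divergence theorem is the same as the paper's appeal to Green's theorem.
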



\begin{proof}
Indeed, by computing the Laplacian of the $2l$-energy density $e_{2l}(\phi)$, we have
\begin{equation}\label{non.2}
\begin{split}
\bigtriangleup e_{2l}(\phi)
=&\sum^m_{i=1}\left\langle \overline \nabla_{e_i}\overline\bigtriangleup ^{(l-1)}\tau(\phi),
\overline \nabla_{e_i}\overline\bigtriangleup ^{(l-1)}\tau(\phi)\right\rangle\\
&\hspace{30pt}
-\left\langle \overline\nabla^*\overline \nabla(\overline\bigtriangleup ^{(l-1)}\tau(\phi)),\overline\bigtriangleup ^{(l-1)}\tau(\phi)\right\rangle \\
=&\sum^m_{i=1}\left\langle \overline \nabla_{e_i}\overline\bigtriangleup ^{(l-1)}\tau(\phi),
\overline \nabla_{e_i}\overline\bigtriangleup ^{(l-1)}\tau(\phi)\right\rangle 
\geq 0.
\end{split}
\end{equation}
By Green's theorem $\int_M\bigtriangleup e_{2l}(\phi) v_g=0,$ and $(\ref{non.2}),$ we have 
$\bigtriangleup e_{2l}(\phi)=0.$
 Again, by $(\ref{non.2}),$ we have
$$\overline \nabla_{e_i}\overline\bigtriangleup ^{(l-1)}\tau(\phi)=0,\ \ \ (i=1,\dotsm,m,\ \ \ l=1,2,\dotsm).$$
\end{proof}

\vspace{3pt}

\begin{lem}\label{olapla harmonic}
Let $l=1,2,\dotsm $. If $\olapla^l\tau(\p)=0$ or $\onabla_{e_i}\olapla^{(l-1)}\tau(\p)=0,\ 
\newline(i=1,2,\dotsm,m)$, then
every $k$-harmonic map $\p:M\rightarrow N$ from a compact Riemannian manifold $M$ into another Riemannian manifold $N$ is a harmonic map.
\end{lem}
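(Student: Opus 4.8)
The plan is to prove Lemma \ref{olapla harmonic} by descending induction on $l$, reducing the hypothesis step by step until one obtains $\tau(\p)=0$. The key observation is that Lemmas \ref{olapla1} and \ref{olapla2} together show that for each $l\geq 1$ the two conditions $\olapla^l\tau(\p)=0$ and $\onabla_{e_i}\olapla^{(l-1)}\tau(\p)=0$ are \emph{equivalent}. Moreover, if $\onabla_{e_i}\olapla^{(l-1)}\tau(\p)=0$ for all $i$, then in particular $\olapla^{(l-1)}\tau(\p)$ is a parallel section, hence $\olapla(\olapla^{(l-1)}\tau(\p))=\olapla^{(l-1)}\tau(\p)\cdot 0$; more usefully, applying Lemma \ref{olapla2} with $l$ replaced by $l-1$ is not quite immediate, so instead I would argue: $\onabla_{e_i}\olapla^{(l-1)}\tau(\p)=0$ implies $\olapla^{(l-1)}\tau(\p)$ is parallel, and a parallel section satisfies $\olapla^{(l-1)}\tau(\p)=\olapla(\olapla^{(l-2)}\tau(\p))$, which being in the image of $\olapla$ and also parallel must vanish by the same $X_\p$-divergence trick as in Lemma \ref{olapla1} (take $X_\p=\sum_j\langle-\onabla_{e_j}\olapla^{(l-2)}\tau(\p),\olapla^{(l-1)}\tau(\p)\rangle e_j$ and note its divergence is $\|\olapla^{(l-1)}\tau(\p)\|^2$ since the cross term has a parallel factor). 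Wait — that requires $\onabla_{e_j}\olapla^{(l-2)}\tau(\p)$ to be controlled, which is exactly what we want to conclude, not assume; so the cleaner route is simply to use that $\olapla^{(l-1)}\tau(\p)$ parallel forces $\olapla^{l-1}\tau(\p)=0$ only after pairing against itself, which does work because $\int_M \olapla(\,\cdot\,)$ paired with a parallel section integrates by parts to kill everything.

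Concretely, here is the induction I would run. Start from the hypothesis: either $\olapla^l\tau(\p)=0$, or $\onabla_{e_i}\olapla^{(l-1)}\tau(\p)=0$ for all $i$. By Lemma \ref{olapla1}, the second alternative implies the first, so without loss of generality we may assume $\olapla^l\tau(\p)=0$. Now by Lemma \ref{olapla2}, $\olapla^l\tau(\p)=0$ implies $\onabla_{e_i}\olapla^{(l-1)}\tau(\p)=0$ for all $i$. But $\onabla_{e_i}\olapla^{(l-1)}\tau(\p)=0$ is precisely the hypothesis of Lemma \ref{olapla1} with $l$ replaced by $l-1$ (when $l\geq 2$), and it also means $\olapla^{(l-1)}\tau(\p)$ is parallel; pairing $\olapla(\olapla^{(l-2)}\tau(\p))=\olapla^{(l-1)}\tau(\p)$ against the parallel section $\olapla^{(l-1)}\tau(\p)$ and integrating by parts gives $\int_M\|\olapla^{(l-1)}\tau(\p)\|^2 v_g=\int_M\langle\onabla_{e_i}\olapla^{(l-2)}\tau(\p),\onabla_{e_i}\olapla^{(l-1)}\tau(\p)\rangle v_g=0$ because the second factor vanishes, hence $\olapla^{(l-1)}\tau(\p)=0$. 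Thus the pair $(l-1)$ now satisfies the same hypothesis as $(l)$ did. Iterating, we descend to $l=1$, i.e. $\olapla\tau(\p)=\onabla^*\onabla\tau(\p)=0$, and then applying Lemma \ref{olapla2} once more with $l=1$ gives $\onabla_{e_i}\tau(\p)=0$ for all $i$, so $\tau(\p)$ is parallel; finally pairing $\olapla\tau(\p)=0$ against $\tau(\p)$ and integrating by parts, or equivalently using the $X_\p$-divergence argument one last time, forces $\tau(\p)=0$, i.e. $\p$ is harmonic.

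The main obstacle I anticipate is making the inductive step fully rigorous without circularity: the temptation is to invoke Lemma \ref{olapla2} to pass from $\olapla^{(l-1)}\tau(\p)=0$ down to $\onabla_{e_i}\olapla^{(l-2)}\tau(\p)=0$ and then Lemma \ref{olapla1} back up, but one must check that Lemma \ref{olapla2} as stated is being applied with a legitimate index (it requires $\olapla^{l'}\tau(\p)=0$ with $l'\geq 1$, so it applies as long as $l-1\geq 1$). The base case $l=1$ is slightly special and should be recorded separately: $\olapla\tau(\p)=0$ on a compact manifold immediately yields, by integration by parts, $\int_M\|\onabla\tau(\p)\|^2v_g = \int_M\langle\olapla\tau(\p),\tau(\p)\rangle v_g = 0$, hence $\onabla\tau(\p)=0$, and then $\tau(\p)$ parallel on compact $M$ combined with $\olapla\tau(\p)=0$ does not by itself give $\tau(\p)=0$ — but actually it is the original $X_\p$ trick with $l=1$ (where $\olapla^{(0)}\tau(\p):=\tau(\p)$) that closes it, or one notes that a parallel section has constant norm and harmonicity is not yet concluded, so the decisive input is genuinely that $\olapla\tau(\p)=0$ forces $\onabla\tau(\p)=0$ forces (via the telescoping divergence identity) $\tau(\p)=0$. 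I would therefore present the argument as: reduce everything via Lemmas \ref{olapla1}, \ref{olapla2} to the single chain $\onabla_{e_i}\olapla^{(l-1)}\tau(\p)=0 \Rightarrow \olapla^{(l-1)}\tau(\p)=0 \Rightarrow \onabla_{e_i}\olapla^{(l-2)}\tau(\p)=0 \Rightarrow \cdots \Rightarrow \tau(\p)=0$, each implication being one application of an already-proven lemma.
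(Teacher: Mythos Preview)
Your approach is correct and is exactly what the paper's one-line proof (``By using Lemma~\ref{olapla1}, \ref{olapla2}'') is gesturing at; you have in fact done more than the paper by spelling out the descent: from $\olapla^l\tau(\p)=0$, Lemma~\ref{olapla2} yields $\onabla_{e_i}\olapla^{(l-1)}\tau(\p)=0$; then the vector field $X_\p=\sum_j\langle-\onabla_{e_j}\olapla^{(l-2)}\tau(\p),\olapla^{(l-1)}\tau(\p)\rangle e_j$ (the same $X_\p$ as in Lemma~\ref{olapla1} with index shifted) has divergence $\|\olapla^{(l-1)}\tau(\p)\|^2$ because the \emph{second} factor in the cross term is parallel, so $\olapla^{(l-1)}\tau(\p)=0$; iterate down to $\onabla_{e_i}\tau(\p)=0$.

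The one place you leave genuinely loose is the very last step $\onabla_{e_i}\tau(\p)=0\Rightarrow\tau(\p)=0$. Pairing $\olapla\tau(\p)=0$ against $\tau(\p)$ only recovers $\onabla\tau(\p)=0$ again, and a parallel section of $\p^{-1}TN$ need not vanish a priori, so neither of the mechanisms you name quite closes it. What does work is one more divergence of the same shape, but with $d\p$ playing the role of the ``$(-1)$st'' level: set $X_\p=\sum_j\langle d\p(e_j),\tau(\p)\rangle e_j$; then at a point in a normal frame
\[
\operatorname{div}X_\p=\sum_j\langle(\wnabla_{e_j}d\p)(e_j),\tau(\p)\rangle+\sum_j\langle d\p(e_j),\onabla_{e_j}\tau(\p)\rangle=\|\tau(\p)\|^2+0,
\]
since $\sum_j(\wnabla_{e_j}d\p)(e_j)=\tau(\p)$ and $\onabla\tau(\p)=0$. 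Integrating over $M$ gives $\tau(\p)=0$. This is the same trick, using that $\tau(\p)$ is itself the trace of $\wnabla d\p$.
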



\begin{proof}
By using Lemma \ref{olapla1}, \ref{olapla2}, we have Lemma \ref{olapla harmonic}.
\end{proof}

\vspace{10pt}

By using Lemma \ref{olapla harmonic}, we show Theorem \ref{non posi harmonic}.

\vspace{10pt}

\begin{flushleft}
Proof of Theorem \ref{non posi harmonic}.
\end{flushleft}

\vspace{-3pt}

By computing the Laplacian of the $2(k-1)$-energy density $e_{2(k-1)}(\phi),$ we have
\begin{equation}\label{laplae2(k-1)}
\begin{split}
&\hspace{-10pt}
\bigtriangleup e_{2(k-1)}(\phi)\\
&\hspace{-10pt}=\left\langle \overline \nabla_{e_i}\overline\bigtriangleup ^{(k-2)}\tau(\phi),
\overline \nabla_{e_i}\overline\bigtriangleup ^{(k-2)}\tau(\phi)\right\rangle
-\left\langle \overline\nabla^*\overline\nabla(\overline\bigtriangleup ^{(k-2)}\tau(\phi)),\overline\bigtriangleup ^{(k-2)}\tau(\phi)\right\rangle.
\end{split}
\end{equation}
By using 
\begin{align}\label{tauk0}
\tau_k (\phi)=\overline\bigtriangleup^{(k-1)}\tau(\phi)-\mathscr{R}(\overline\bigtriangleup^{(k-2)}\tau(\phi))=0.
\end{align}
Then, we have
\begin{equation}
\begin{split}
-&\left\langle \overline \nabla_{e_i}\overline\bigtriangleup ^{(k-2)}\tau(\phi),
\overline \nabla_{e_i}\overline\bigtriangleup ^{(k-2)}\tau(\phi)\right\rangle
=-\left\langle\mathscr{R}(\overline\bigtriangleup ^{(k-2)}\tau(\phi)),\overline\bigtriangleup ^{(k-2)}\tau(\phi)\right\rangle \\
&\geq 0,
\end{split}
\end{equation}
due to ${\rm Riem^N}\leq 0$.
By Green's theorem, 
\begin{equation}\label{green}
\begin{split}
0=\int_M\lapla e_{2(k-1)}(\phi)v_g
&=\int_M
\left\langle \overline \nabla_{e_i}\overline\bigtriangleup ^{(k-2)}\tau(\phi),
\overline \nabla_{e_i}\overline\bigtriangleup ^{(k-2)}\tau(\phi)\right\rangle v_g\\
&\hspace{20pt}
-\int_M
\left\langle\mathscr{R}(\overline\bigtriangleup ^{(k-2)}\tau(\phi)),\overline\bigtriangleup ^{(k-2)}\tau(\phi)\right\rangle v_g.
\end{split}
\end{equation}
Then, the both terms of the RHS of $(\ref{green})$ are non-negative, so that we have  
\begin{equation}\label{non.1}
\begin{split}
0&=\bigtriangleup e_{2(k-1)}(\phi)
=\left\langle \overline \nabla_{e_i}\overline\bigtriangleup ^{(k-2)}\tau(\phi),
\overline \nabla_{e_i}\overline\bigtriangleup ^{(k-2)}\tau(\phi)\right\rangle\\
&\hspace{100pt}-\left\langle\mathscr{R}(\overline\bigtriangleup ^{(k-2)}\tau(\phi)),\overline\bigtriangleup ^{(k-2)}\tau(\phi)\right\rangle.\\
\end{split}
\end{equation}
Then, since the both terms of $(\ref{non.1})$ are non-negative again, we have
\begin{align}
\overline\nabla_{e_i}\overline\bigtriangleup^{(k-2)}\tau(\phi)=0, \ \ \ \ (i=1,\dotsm,m).
\end{align}
By using Lemma \ref{olapla harmonic}, we have $\tau(\phi)=0.$
$\qed$

\section{Stable $k$-harmonic maps}\label{stable}
In this section, we generalize the results of Jiang \cite{jg1} on stable $2$-harmonic maps to stable $k$-harmonic maps.

By the second variation formula, Jiang \cite{jg1} defined the notion of stable $2$-harmonic maps as follows. 

\vspace{10pt}

\begin{defn}[\cite{jg1}]
Let $\p:M\rightarrow N$ be a $2$-harmonic map of a compact Riemannian manifold $M$ into
 a Riemannian manifold $N$. 
 Then, $\p$ is {\em stable} if the second variation of $2$-energy is non-negative for every 
 variation $\{\p_t\}$ along $\p$.
\end{defn}

\vspace{10pt}

Notice that by definition of the $2$-energy, any harmonic maps are stable $2$-harmonic maps.
 This also can be seen as follows:
 since $\tau(\p)=0$, for a vector field $V$ 
  of any variation $\{\p_t\}$, it holds that
\begin{align}
\left.\frac{d^2}{dt^2}E_2(\p_t)\right|_{t=0}
=\int_M\| -\overline\nabla^*\overline\nabla V+R^N(V,d\p(e_i))d\p(e_i)\|^2v_g\geq0.
\end{align}

\vspace{10pt}

\begin{thm}[\cite{jg1}]
Assume that $M$ is a compact Riemannian manifold,  and $N$ is a Riemannian manifold with a non-negative constant sectional curvature $K\geq 0$. Then, there is no non-trivial stable $2$-harmonic map satisfying 
the conservation law.
\end{thm}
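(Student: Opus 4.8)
The plan is to probe Jiang's second variation formula $(\ref{jg1 second variation})$ with the single distinguished variation field $V=\tau(\p)$, which is legitimate since every smooth section of $\p^{-1}TN$ is the initial velocity of some variation (e.g.\ $\p_t=\exp_\p(tV)$). I would then let the two hypotheses collapse the formula to one signed integral. First, because $N$ has constant sectional curvature $K$, we have $\nabla^NR^N=0$ and $R^N(X,Y)Z=K(\langle Y,Z\rangle X-\langle X,Z\rangle Y)$; the vanishing of $\nabla^NR^N$ kills the two terms of $(\ref{jg1 second variation})$ that carry a derivative of $R^N$.

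Next I would dispose of the first integral of $(\ref{jg1 second variation})$. Setting $V=\tau(\p)$, its second slot is $-\onabla^*\onabla\tau(\p)+R^N(\tau(\p),d\p(e_i))d\p(e_i)=-\olapla\tau(\p)+\mathscr{R}(\tau(\p))=-\tau_2(\p)$, which vanishes by the $2$-harmonic equation $(\ref{propkharmonic})$. Hence the entire first integral is zero.

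There remains the second integral. With $V=\tau(\p)$ the term $R^N(\tau(\p),\tau(\p))\tau(\p)$ vanishes by antisymmetry, while the two remaining curvature terms coincide and sum to $4\langle\tau(\p),R^N(\tau(\p),d\p(e_i))\onabla_{e_i}\tau(\p)\rangle$. Expanding $R^N$ by the constant-curvature formula and using the conservation law $\langle\tau(\p),d\p(e_i)\rangle=0$ reduces the $i$-th summand to $K\,|\tau(\p)|^2\langle d\p(e_i),\onabla_{e_i}\tau(\p)\rangle$. The decisive step is to compute $\sum_i\langle d\p(e_i),\onabla_{e_i}\tau(\p)\rangle$: differentiating the conservation law $\langle\tau(\p),d\p(e_j)\rangle=0$ along $e_i$ in a frame normal at the point and tracing gives $\sum_i\langle\onabla_{e_i}\tau(\p),d\p(e_i)\rangle=-\sum_i\langle\tau(\p),B(e_i,e_i)\rangle=-|\tau(\p)|^2$, since $\sum_iB(e_i,e_i)=\tau(\p)$.

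Assembling these pieces, the second variation at $V=\tau(\p)$ equals $-4K\int_M|\tau(\p)|^4v_g\le0$. Stability demands it be $\ge0$, so $K\int_M|\tau(\p)|^4v_g=0$; if $K>0$ this forces $\tau(\p)=0$, and the case $K=0$ is already covered by Theorem \ref{non posi harmonic}. In either case $\p$ is harmonic, hence trivial. The step I expect to be the main obstacle is the bookkeeping of the second integral---confirming that under constant curvature and the conservation law every curvature term either vanishes by antisymmetry, cancels, or folds into the single trace identity $\sum_i\langle d\p(e_i),\onabla_{e_i}\tau(\p)\rangle=-|\tau(\p)|^2$, whose derivation from the conservation law is the heart of the argument.
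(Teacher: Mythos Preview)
Your argument is correct and follows the same route as the paper's own proof of the $k$-harmonic generalization (Theorem~\ref{non stable k-harmonic}) when specialized to $k=2$: choose $V=\olapla^{k-2}\tau(\p)=\tau(\p)$, use $\nabla^NR^N=0$ to kill the covariant-derivative-of-curvature terms, let the $2$-harmonic equation annihilate the first integral, expand the remaining curvature terms via the constant-curvature formula, and reduce everything to the trace identity $\sum_i\langle d\p(e_i),\onabla_{e_i}\tau(\p)\rangle=-|\tau(\p)|^2$ obtained from the conservation law. The paper's computation for general $k$ yields, at $k=2$, exactly your $-4K\int_M|\tau(\p)|^4\,v_g$, and your handling of the $K=0$ case via Theorem~\ref{non posi harmonic} mirrors the paper's appeal to Lemma~\ref{olapla harmonic}.
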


\vspace{10pt}

By the second variation formula $({\rm cf.\ Theorem\ \ref{th second variation of k-harmonic}})$,
 we can introduce the notion of stable $k$-harmonic maps.

\vspace{10pt}

\begin{defn}
Let $\p:M\rightarrow N$ be any $k$-harmonic map of a compact Riemannian manifold $M$ into a Riemannian manifold $N.$ 
 Then, $\p$ is {\em stable} if the second variation of $k$-energy is non-negative for every variation $\{\p_t\}$ of $\p$, i.e., 
 $(\ref{second variation of k-harmonic})$ in Theorem \ref{th second variation of k-harmonic}
 is non-negative for every vector field $V$
 along $\p$.
\end{defn}

\vspace{10pt}
 
We have immediately
 \begin{prop}
  All harmonic maps $\p:M\rightarrow N$ are stable $2l$-harmonic maps
$(l=1,2,\dotsm)$.
\end{prop}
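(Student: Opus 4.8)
The proposition asserts that any harmonic map $\p:M\to N$ is a stable $2l$-harmonic map for $l=1,2,\dotsm$. The natural strategy is to specialize the second variation formula of Theorem~\ref{th second variation of k-harmonic} with $k=2l$ to the case where $\p$ is harmonic, i.e.\ $\tau(\p)=0$, and observe that the surviving terms assemble into a manifestly non-negative expression. First I would record that if $\tau(\p)=0$ then $\olapla^{k-2}\tau(\p)=0$ and, more generally, every term in the second integral of $(\ref{second variation of k-harmonic})$ contains at least one factor of $\tau(\p)$ or of $\olapla^{k-2}\tau(\p)$ (namely the terms involving $(\nabla^N_{d\p(e_k)}R^N)(\cdot,\cdot)\tau(\p)$, $R^N(\tau(\p),V)\tau(\p)$, $R^N(d\p(e_k),\onabla_{e_k}V)\tau(\p)$, $R^N(d\p(e_k),V)\onabla_{e_k}\tau(\p)$, $(\nabla^N_V R^N)(d\p(e_i),\olapla^{k-2}\tau(\p))d\p(e_i)$, $R^N(\onabla_{e_i}V,d\p(e_i))\olapla^{k-2}\tau(\p)$, and $R^N(\olapla^{k-2}\tau(\p),d\p(e_i))\onabla_{e_i}V$); each such term therefore vanishes identically. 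Hence the whole second integral drops out.

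What remains is
\begin{equation*}
\left.\frac{d^2}{dt^2}E_{2l}(\p_t)\right|_{t=0}
=\int_M\left\langle \onabla^*\onabla V-R^N(V,d\p(e_i))d\p(e_i),\
\olapla^{2l-2}\bigl(\onabla^*\onabla V-R^N(V,d\p(e_i))d\p(e_i)\bigr)\right\rangle v_g.
\end{equation*}
The plan is then to abbreviate $W:=\onabla^*\onabla V-R^N(V,d\p(e_i))d\p(e_i)=\olapla V-R^N(V,d\p(e_i))d\p(e_i)\in\Gamma(\p^{-1}TN)$ (recalling $\olapla=\onabla^*\onabla$ on $\p^{-1}TN$) and to note that $k-2=2l-2=2(l-1)$ is even, so $\olapla^{2l-2}=(\olapla^{l-1})^2$. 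Writing $L:=\olapla^{l-1}$, which is a self-adjoint operator on $\Gamma(\p^{-1}TN)$ with respect to the $L^2$ inner product because $\olapla=\onabla^*\onabla$ is self-adjoint and non-negative, we get
\begin{equation*}
\left.\frac{d^2}{dt^2}E_{2l}(\p_t)\right|_{t=0}
=\int_M\langle W,L^2 W\rangle v_g
=\int_M\langle LW,LW\rangle v_g
=\int_M\|LW\|^2 v_g\geq 0,
\end{equation*}
using integration by parts (Green's theorem, legitimate since $M$ is compact) to move one factor of $L=\olapla^{l-1}$ across the inner product. Thus the second variation is non-negative for every variation, which is exactly the definition of stability; and of course a harmonic map is in particular $2l$-harmonic since $\tau_{2l}(\p)=J(\olapla^{2l-2}\tau(\p))=0$ when $\tau(\p)=0$.

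The only genuine point requiring care—and the step I expect to be the main obstacle in making the argument airtight—is the integration by parts that splits $\olapla^{2l-2}$ as $(\olapla^{l-1})^*\olapla^{l-1}$ and, more precisely, justifying that $\onabla^*\onabla$ is $L^2$-self-adjoint on sections of $\p^{-1}TN$ over compact $M$ without boundary, so that iterating gives $\int_M\langle W,\olapla^{2l-2}W\rangle v_g=\int_M\|\olapla^{l-1}W\|^2 v_g$; this is standard (it follows from the definition of the formal adjoint together with the divergence theorem applied to the one-form $X\mapsto\langle\onabla_X(\olapla^{j}W),\olapla^{l-2-j}W\rangle-\langle\olapla^{j}W,\onabla_X(\olapla^{l-1-j}W)\rangle$), but it is worth stating explicitly. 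Everything else is a direct bookkeeping specialization of $(\ref{second variation of k-harmonic})$.
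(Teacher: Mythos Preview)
Your argument is correct and follows essentially the same route as the paper: specialize the second variation formula with $\tau(\p)=0$ so that the second integral vanishes, and then rewrite the remaining term $\int_M\langle W,\olapla^{2(l-1)}W\rangle\,v_g$ as $\int_M\|\olapla^{l-1}W\|^2\,v_g\geq 0$ with $W=\onabla^*\onabla V-R^N(V,d\p(e_i))d\p(e_i)$. The paper states this in a single line without spelling out the self-adjointness step you flagged; your added justification of $\int_M\langle W,\olapla^{2l-2}W\rangle\,v_g=\int_M\|\olapla^{l-1}W\|^2\,v_g$ via iterated integration by parts is a welcome clarification rather than a different idea.
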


\vspace{5pt}

\begin{proof}
Let $k=2l\ (l=1,2,\dotsm).$
 Since $\tau(\p)=0$,
 by Theorem \ref{th second variation of k-harmonic},
 we have 
\begin{align*}
\left.\frac{d^2}{dt^2}E_{k}(\p_t)\right|_{t=0}
=\int_M\|\olapla^{l-1}(\onabla^*\onabla V-R^N(V,d\p(e_i))d\p(e_i))\|^2 v_g\geq 0.
\end{align*}
\end{proof}

Furthermore, one can consider a stable $k$-harmonic map into a Riemannian manifold $(N,h)$ 
 of constant sectional curvature. Then, we have

\vspace{10pt}

\begin{thm}\label{non stable k-harmonic}
Assume that $M$ is a compact Riemannian manifold,  and $N$ is a Riemannian manifold
 of non-negative constant sectional curvature $K\geq 0$. Then, there are no stable proper $k$-harmonic maps satisfying  
the conservation law, the $k$-conservation law and the $2(k-1)$-conservation law.
\end{thm}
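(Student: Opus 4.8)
The strategy is to choose a clever test variation and plug it into the second variation formula $(\ref{second variation of k-harmonic})$, exploiting the three conservation-law hypotheses to kill the troublesome curvature-derivative terms, so that non-negativity of the Hessian forces $\tau(\p)=0$. Since $N$ has constant sectional curvature $K$, the curvature tensor is $R^N(X,Y)Z=K(\langle Y,Z\rangle X-\langle X,Z\rangle Z)$, so $\nabla^N R^N=0$ and all the terms in $(\ref{second variation of k-harmonic})$ containing $\nabla^N R^N$ drop out. I would take the test field $V=\olapla^{k-2}\tau(\p)$, equivalently the natural variation along the $k$-tension field, as in Jiang's argument for $k=2$; the point of the conservation-law hypotheses is precisely that they make the ``cross terms'' obtained after integration by parts vanish, analogously to how ${\rm div}\,S_\p=0$ is used in the $k=2$ case.

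The key steps, in order, are: first, specialize $(\ref{second variation of k-harmonic})$ to constant curvature $K\ge 0$, discarding the $(\nabla^N_V R^N)$ term and rewriting the remaining $R^N$-terms via the constant-curvature formula. Second, substitute $V=\olapla^{k-2}\tau(\p)$ and integrate by parts to move $\olapla^{k-2}$ across the first inner product, turning the leading term into $\|\olapla^{k-2}(\onabla^*\onabla V-R^N(V,d\p(e_i))d\p(e_i))\|^2$-type expressions — nonnegative — plus curvature remainder terms. Third, use the $k$-conservation law ${\rm div}\,S_\p^k=0$, i.e. $\langle\olapla^{k-2}\tau(\p),d\p(X)\rangle=0$ for all $X$, together with the conservation law $\langle\tau(\p),d\p(X)\rangle=0$ and the $2(k-1)$-conservation law, to see that the inner products of the form $\langle\olapla^{k-2}\tau(\p),d\p(e_i)\rangle$, $\langle\onabla_{e_i}V,d\p(e_i)\rangle$, etc., that appear when the constant-curvature $R^N$ is expanded, all vanish; this collapses the second integral in $(\ref{second variation of k-harmonic})$. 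Fourth, conclude that for this $V$ the second variation reduces to $\int_M\big(\text{nonneg. terms}\big)\,v_g$ but, on the other hand, a direct computation (as in the $k=2$ case) shows it equals $-K\int_M\big(\text{nonneg. terms}\big)v_g$ up to the squared term, forcing both sides to vanish and hence $\olapla^{k-2}\tau(\p)$ (and its covariant derivative) to vanish; finally apply Lemma \ref{olapla harmonic} to deduce $\tau(\p)=0$, contradicting properness.

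The main obstacle I anticipate is the bookkeeping in the third step: verifying that every surviving curvature term, after expanding $R^N$ in constant-curvature form and integrating by parts, really is either manifestly nonnegative or is annihilated by one of the three conservation laws. In the biharmonic case Jiang only needed ${\rm div}\,S_\p=0$; here one must track which of the three hypotheses disposes of which term — plausibly the conservation law handles terms with a bare $\tau(\p)$ paired against $d\p$, the $k$-conservation law handles terms with $\olapla^{k-2}\tau(\p)$ paired against $d\p$, and the $2(k-1)$-conservation law handles an intermediate term arising from integrating by parts through the iterated rough Laplacian. Once these cancellations are in place, the sign argument and the appeal to Lemma \ref{olapla harmonic} are routine.
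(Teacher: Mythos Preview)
Your overall architecture is the same as the paper's: drop the $\nabla^N R^N$ terms by constant curvature, take the test field $V=\olapla^{k-2}\tau(\p)$, expand the remaining curvature terms, use the three conservation laws to clean up, and finish with Lemma~\ref{olapla harmonic}. That is exactly how the proof goes.

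There is, however, a genuine gap in your Step~2 and the logic of Step~4. You say that integrating by parts turns the first integral into a nonnegative squared expression ``plus curvature remainder terms'', and then in Step~4 you try to play a nonnegative term off against $-K$ times a nonnegative term. That reasoning does not close: from ``$A-KB\ge 0$ with $A,B\ge 0$, $K\ge 0$'' you cannot conclude that anything vanishes. The point you are missing is that the first integral is not merely nonnegative --- it is \emph{identically zero}. With $V=\olapla^{k-2}\tau(\p)$ one has
\[
\onabla^*\onabla V-R^N(V,d\p(e_i))d\p(e_i)
=\olapla\bigl(\olapla^{k-2}\tau(\p)\bigr)-\mathscr{R}\bigl(\olapla^{k-2}\tau(\p)\bigr)
=\tau_k(\p)=0,
\]
since $\p$ is $k$-harmonic. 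So the entire first integral in $(\ref{second variation of k-harmonic})$ drops out immediately; this is precisely why the choice $V=\olapla^{k-2}\tau(\p)$ is the right one (the same mechanism as in Jiang's $k=2$ argument, where $V=\tau(\p)$ kills the leading term). After that, the surviving curvature integral, expanded via the constant-curvature formula and simplified by the three conservation laws, is exactly $-K\int_M(\text{sum of nonneg.\ terms})\,v_g\le 0$; stability gives $\ge 0$; hence each term vanishes and Lemma~\ref{olapla harmonic} applies.

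A minor correction: moving $\olapla^{k-2}$ across the inner product produces no ``curvature remainder terms''. The rough Laplacian $\olapla=\onabla^*\onabla$ is formally self-adjoint on $\Gamma(\p^{-1}TN)$, so $\int_M\langle A,\olapla B\rangle\,v_g=\int_M\langle \olapla A,B\rangle\,v_g$ exactly; no commutator corrections appear.
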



\begin{proof}
Since $N$ has constant curvature, $\nabla^NR^N=0$, so that $(\ref{second variation of k-harmonic})$ becomes
\begin{equation}\label{second variation of k-harmonic 1}
\begin{split}
&\left.\frac{d^2}{dt^2}E_{k}({\p_t})\right|_{t=0}
=\int_M
\left\langle
\onabla^*\onabla V-R^N(V,d\p(e_i))d\p(e_i),\right.\\
&\hspace{100pt}
\left.
\olapla^{k-2}(\onabla^*\onabla V-R^N(V,d\p(e_i))d\p(e_i))
\right\rangle
v_g\\
&\hspace{70pt}-\int_M
\left\langle
V,
\olapla^{k-2}
\left\{R^N(\tau(\p),V)\tau(\p)\right.\right.\\
&\hspace{150pt}
+R^N(d\p(e_k),\onabla_{e_k}V)\tau(\p)\\
&\hspace{150pt}
\left.+2R^N(d\p(e_k),V)\onabla_{e_k}\tau(\p)\right\}\\
&\hspace{120pt}
+R^N(\onabla_{e_i}V,d\p(e_i)) \olapla^{k-2}\tau(\p)\\
&\hspace{120pt}
\left.-2R^N(\olapla^{k-2}\tau(\p),d\p(e_i))\onabla_{e_i}V
\right\rangle v_g.
\end{split}
\end{equation}
Especially, if we take $V=\olapla^{k-2}\tau(\p)$, then the first term of the RHS of
$(\ref{second variation of k-harmonic 1})$ must vanish. So we have

\begin{equation}\label{second variation of k-harmonic 2}
\begin{split}
&\hspace{-30pt}\left.\frac{d^2}{dt^2}E_{k}({\p_t})\right|_{t=0}\\
&\hspace{-10pt}=-\int_M
\langle
\olapla^{2(k-2)}\tau(\p),
R^N(\tau(\p),\olapla^{k-2}\tau(\p) )\tau(\p)\\
&\hspace{100pt}
+R^N(d\p(e_k),\onabla_{e_k}\olapla^{k-2}\tau(\p))\tau(\p)\\
&\hspace{100pt}
+2R^N(d\p(e_k),\olapla^{k-2}\tau(\p))\onabla_{e_k}\tau(\p)
\rangle v_g\\
&-\int_M
\langle
\olapla^{k-2}\tau(\p),
R^N(\onabla_{e_i}\olapla^{k-2}\tau(\p),d\p(e_i)) \olapla^{k-2}\tau(\p)\\
&\hspace{70pt}
-2R^N(\olapla^{k-2}\tau(\p),d\p(e_i))\onabla_{e_i}\olapla^{k-2}\tau(\p)
\rangle v_g.\\
&\hspace{-10pt}=-K\int_M
\langle
\tau(\p),\olapla^{2(k-2)}\tau(\p)
\rangle
\langle
\olapla^{k-2}\tau(\p),\tau(\p)
\rangle\\
&\hspace{30pt}
-\|\tau(\p)\|^2
\langle
\olapla^{k-2}\tau(\p),\olapla^{2(k-2)}\tau(\p)
\rangle\\
&\hspace{30pt}
+\langle
\onabla_{e_k}\olapla^{k-2}\tau(\p),\tau(\p)
\rangle
\langle
d\p(e_k),\olapla^{2(k-2)}\tau(\p)
\rangle\\
&\hspace{30pt}
-\langle
d\p(e_k),\tau(\p)
\rangle
\langle
\onabla_{e_k}\olapla^{k-2}\tau(\p),\olapla^{2(k-2)}\tau(\p)
\rangle\\
&\hspace{30pt}
+2\langle
\olapla^{k-2}\tau(\p),\onabla_{e_k}\tau(\p),
\rangle
\langle
d\p(e_k),\olapla^{2(k-2)}\tau(\p)
\rangle\\
&\hspace{30pt}
-2\langle
d\p(e_k),\onabla_{e_k}\tau(\p)
\rangle
\langle
\olapla^{k-2}\tau(\p),\olapla^{2(k-2)}\tau(\p)
\rangle\\
&\hspace{30pt}
+\langle
d\p(e_i),\olapla^{k-2}\tau(\p)
\rangle
\langle
\onabla_{e_i}\olapla^{k-2}\tau(\p),\olapla^{k-2}\tau(\p)
\rangle\\
&\hspace{30pt}
-\langle
\onabla_{e_i}\olapla^{k-2}\tau(\p),\olapla^{k-2}\tau(\p)
\rangle
\langle
d\p(e_i),\olapla^{k-2}\tau(\p)
\rangle\\
&\hspace{30pt}
-2\langle
d\p(e_i),\onabla_{e_i}\olapla^{k-2}\tau(\p)
\rangle
\|\olapla^{k-2}\tau(\p)\|^2\\
&\hspace{30pt}
+2\langle
\olapla^{k-2}\tau(\p),\onabla_{e_i}\olapla^{k-2}\tau(\p)
\rangle
\langle
d\p(e_i),\olapla^{k-2}\tau(\p)
\rangle
v_g.
\end{split}
\end{equation}
Here, we have
\begin{equation}
\begin{split}
\langle
d\p(e_i),\onabla_{e_i}\olapla^{k-2}\tau(\p)
\rangle
&=e_i
\langle
d\p(e_i),\olapla^{k-2}\tau(\p)
\rangle
-\langle
\onabla_{e_i}d\p(e_i),\olapla^{k-2}\tau(\p)
\rangle\\
&=-\langle
\tau(\p),\olapla^{k-2}\tau(\p)
\rangle
-\langle
d\p(\nabla_{e_i}e_i),\olapla^{k-2}\tau(\p)
\rangle\\
&=-\langle
\tau(\p),\olapla^{k-2}\tau(\p)
\rangle.
\end{split}
\end{equation}
By the assumptions, we have that 
\begin{equation}
\begin{cases}
&\langle\tau(\p),d\p(X) \rangle=0,\\
&\langle d\p(X),\olapla^{k-2}\tau(\p) \rangle=0,\\
&\langle d\p(X),\olapla^{2(k-2)}\tau(\p) \rangle=0.
\end{cases}
\end{equation}
 for all $X\in \Gamma(TM)$.
And we have $\langle d\p(e_k),\onabla_{e_k}\tau(\p) \rangle=-\|\tau(\p)\|^2$.
Thus, we have
\begin{equation}
\begin{split}
\left.\frac{d^2}{dt^2}E_{k}({\p_t})\right|_{t=0}
&=-K\int_M
\langle
\tau(\p),\olapla^{2(k-2)}\tau(\p)
\rangle
\langle
\olapla^{k-2}\tau(\p),\tau(\p)
\rangle\\
&\hspace{30pt}
+\|\tau(\p)\|^2
\langle
\olapla^{k-2}\tau(\p),\olapla^{2(k-2)}\tau(\p)
\rangle\\
&\hspace{30pt}
+2\langle
\tau(\p),\olapla^{k-2}\tau(\p)
\rangle
\|\olapla^{k-2}\tau(\p)\|^2
v_g.
\end{split}
\end{equation}

Now, we divide the situation into two cases.

Case $1)$\ \ $k=2l\ (l=1,2,\dotsm)$.
\ \ \ In this case, we have
\begin{equation}
\begin{split}
0\leq
\left.\frac{d^2}{dt^2}E_{2l}({\p_t})\right|_{t=0}
&=-K\int_M
\|\olapla^{2(l-1)}\tau(\p)\|^2
\|\olapla^{l-1}\tau(\p)\|^2\\
&\hspace{30pt}
+\|\tau(\p)\|^2
\|\olapla^{3(l-1)}\tau(\p)\|^2\\
&\hspace{30pt}
+2\|\olapla^{l-1}\tau(\p)\|^2
\|\olapla^{2(l-1)}\tau(\p)\|^2
v_g
\leq 0.
\end{split}
\end{equation}
By using Lemma \ref{olapla harmonic}, we obtain $\tau(\p)=0$.

\vspace{10pt}

Case $2)$\ \ $k=2l+1\ (l=1,2,\dotsm)$.
\ \ \ In this case, we have
\begin{equation}
\begin{split}
0\leq
\left.\frac{d^2}{dt^2}E_{2l+1}({\p_t})\right|_{t=0}
&=-K\int_M
\|\onabla_{e_i}\olapla^{2(l-1)}\tau(\p)\|^2
\|\onabla_{e_i}\olapla^{l-1}\tau(\p)\|^2\\
&\hspace{30pt}+
\|\tau(\p)\|^2
\|\onabla_{e_i}\olapla^{3l-2}\tau(\p)\|^2\\
&\hspace{30pt}
+2\|\onabla_{e_i}\olapla^{l-1}\tau(\p)\|^2
\|\olapla^{2l-1}\tau(\p)\|^2
v_g
\leq 0.
\end{split}
\end{equation}
By using Lemma \ref{olapla harmonic}, we obtain $\tau(\p)=0$.

So, we have Theorem \ref{non stable k-harmonic}.
\end{proof}


\section{The $k$-harmonic maps into the product spaces}\label{product}

In this section, we describe the necessary and sufficient condition to be 
$k$-harmonic maps into the product spaces. 
 First, let us recall the result of Y.-L. Ou \cite{ylo1}.
\begin{thm}[\cite{ylo1}]\label{thm1 of ylo1}
Let $\varphi :(M,g)\rightarrow (N_1,h_1)$ and 
$\psi :(M,g)\rightarrow (N_2,h_2)$ be two maps. Then, the map 
$\phi:(M,g)\rightarrow (N_1\times N_2,h_1\times h_2)$ with 
$\phi(x)=(\varphi(x),\psi(x))$ is $2$-harmonic if and only if the both map 
$\varphi$ or $\psi$ are $2$-harmonic. Furthermore, if one of $\varphi$ or $\psi$ is $2$-harmonic
and the other is a proper $2$-harmonic map, then $\phi$ is a proper $2$-harmonic map.
\end{thm}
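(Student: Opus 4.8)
The plan is to reduce the statement to the $k$-harmonic map equation $\tau_k(\phi)=0$ from Theorem~\ref{kharmonic} and exploit the fact that, for a product target, the tension field and all the operators appearing in $\tau_k$ split as direct sums. First I would fix notation: write $\phi=(\varphi,\psi)$, so that $\phi^{-1}T(N_1\times N_2)\cong \varphi^{-1}TN_1\oplus\psi^{-1}TN_2$, and observe that the pullback connection, the second fundamental form, and hence the tension field decompose as $\tau(\phi)=(\tau(\varphi),\tau(\psi))$. The key structural remark is that the Levi-Civita connection of $h_1\times h_2$ has no mixed components, so the rough Laplacian also splits, $\olapla^{\,\phi}=\olapla^{\,\varphi}\oplus\olapla^{\,\psi}$, and likewise the curvature term $\mathscr{R}$ acts block-diagonally because $R^{N_1\times N_2}$ annihilates cross terms. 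Consequently $\overline\triangle^{(k-2)}\tau(\phi)=(\overline\triangle^{(k-2)}\tau(\varphi),\overline\triangle^{(k-2)}\tau(\psi))$ and $\tau_k(\phi)=(\tau_k(\varphi),\tau_k(\psi))$.

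With that decomposition in hand, the equivalence is immediate: $\tau_k(\phi)=0$ in $\phi^{-1}T(N_1\times N_2)$ if and only if both components vanish, i.e. $\tau_k(\varphi)=0$ and $\tau_k(\psi)=0$, which by the Corollary to Theorem~\ref{kharmonic} is exactly the assertion that $\varphi$ and $\psi$ are both $k$-harmonic. I would state this as the first half of the theorem. For the ``furthermore'' part, suppose $\varphi$ is $k$-harmonic (so $\tau_k(\varphi)=0$) and $\psi$ is a proper $k$-harmonic map, meaning $\tau_k(\psi)=0$ but $\tau(\psi)\neq 0$. Then $\tau(\phi)=(\tau(\varphi),\tau(\psi))$ has $\tau(\psi)\neq 0$ as a component, hence $\tau(\phi)\neq 0$, so $\phi$ is $k$-harmonic but not harmonic, i.e. proper.

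The main obstacle — really the only non-formal point — is verifying cleanly that the rough Laplacian $\olapla$ and the curvature operator $\mathscr{R}$ genuinely respect the splitting, since these are the iterated operators built into $\tau_k$; once one checks that $\onabla^{\phi}_X(W_1,W_2)=(\onabla^{\varphi}_X W_1,\onabla^{\psi}_X W_2)$ for sections $W_i$ of the respective pullback bundles (which follows from the product structure of the target connection and the fact that $d\phi(X)=(d\varphi(X),d\psi(X))$), everything else is bookkeeping. One should also note that the splitting of $\mathscr{R}$ uses $R^{N_1\times N_2}\big((X_1,X_2),(Y_1,Y_2)\big)(Z_1,Z_2)=\big(R^{N_1}(X_1,Y_1)Z_1,\,R^{N_2}(X_2,Y_2)Z_2\big)$, a standard property of Riemannian products. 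I would present the argument in this order: (1) decomposition of bundles and connections; (2) decomposition of $\tau$, $\olapla$, $\mathscr{R}$, hence of $\tau_k$; (3) the equivalence; (4) the properness statement.
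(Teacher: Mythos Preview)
Your proposal is correct and follows essentially the same approach as the paper. Note that the paper does not actually give its own proof of this statement---it is cited from \cite{ylo1}---but your argument coincides with the paper's proof of the $k$-harmonic generalization (Theorem~\ref{ps}): split $\tau(\phi)$, $\olapla$, and the curvature term componentwise using the product structure, conclude $\tau_k(\phi)=\tau_k(\varphi)+\tau_k(\psi)$ (written as a direct sum), and read off both the equivalence and the properness claim. Your write-up is in fact slightly more careful than the paper's, since you spell out why $\onabla^{\phi}$ and $\mathscr{R}$ respect the splitting rather than just asserting it.
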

We generalize Theorem \ref{thm1 of ylo1} for $k$-harmonic maps. Namely, we have
 the following theorem which is useful to construct examples the $k$-harmonic maps.
\begin{thm}\label{ps}
Let $\varphi :(M,g)\rightarrow (N_1,h_1)$ and 
$\psi :(M,g)\rightarrow (N_2,h_2)$ be two maps. Then, the map 
$\phi:(M,g)\rightarrow (N_1\times N_2,h_1\times h_2)$ with 
$\phi(x)=(\varphi(x),\psi(x))$ is $k$-harmonic if and only if the both map 
$\varphi$ or $\psi$ are $k$-harmonic. Furthermore, if one of $\varphi$ or $\psi$ is harmonic 
and the other is a proper $k$-harmonic map, then $\phi$ is a proper $k$-harmonic map.
\end{thm}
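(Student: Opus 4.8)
The plan is to reduce everything to the behaviour of the tension field and the rough Laplacian under the product structure, and then feed the result into the $k$-harmonic equation $(\ref{propkharmonic})$. First I would observe that for the product map $\phi=(\varphi,\psi)$ one has the orthogonal (parallel) splitting $\phi^{-1}T(N_1\times N_2)=\varphi^{-1}TN_1\oplus\psi^{-1}TN_2$, compatible with the metric $h_1\times h_2$ and with the induced connection $\onabla$; moreover the curvature of $N_1\times N_2$ splits as $R^{N_1}\oplus R^{N_2}$ and sends each factor into itself. From this it follows immediately that $d\phi=d\varphi\oplus d\psi$, hence $\tau(\phi)=\tau(\varphi)\oplus\tau(\psi)$, and, since the splitting is $\onabla$-parallel, $\olapla^{\,j}\tau(\phi)=\olapla_1^{\,j}\tau(\varphi)\oplus\olapla_2^{\,j}\tau(\psi)$ for every $j$, where $\olapla_a$ denotes the rough Laplacian on $\varphi^{-1}TN_a$ (resp.\ $\psi^{-1}TN_a$).

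Next I would apply the operator $\mathscr{R}$, i.e.\ $X\mapsto\sum_i R^N(X,d\phi(e_i))d\phi(e_i)$, to $\olapla^{k-2}\tau(\phi)$; because $R^N=R^{N_1}\oplus R^{N_2}$ and because $R^{N_a}$ only couples the $a$-th components, one gets $\mathscr{R}(\olapla^{k-2}\tau(\phi))=\mathscr{R}_1(\olapla_1^{k-2}\tau(\varphi))\oplus\mathscr{R}_2(\olapla_2^{k-2}\tau(\psi))$, with $\mathscr{R}_a$ the corresponding curvature term for $\varphi$ (resp.\ $\psi$). Combining this with the previous paragraph, the $k$-tension field splits as
\begin{equation*}
\tau_k(\phi)=\tau_k(\varphi)\oplus\tau_k(\psi),
\end{equation*}
where $\tau_k(\varphi)=\olapla_1^{k-1}\tau(\varphi)-\mathscr{R}_1(\olapla_1^{k-2}\tau(\varphi))$ and similarly for $\psi$. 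Since the two summands lie in complementary, metrically orthogonal subbundles, $\tau_k(\phi)=0$ if and only if $\tau_k(\varphi)=0$ and $\tau_k(\psi)=0$; by the Corollary after Theorem \ref{kharmonic} this is exactly the statement that $\phi$ is $k$-harmonic iff both $\varphi$ and $\psi$ are $k$-harmonic. For the last sentence, suppose $\varphi$ is harmonic ($\tau(\varphi)=0$) and $\psi$ is a proper $k$-harmonic map ($\tau_k(\psi)=0$ but $\tau(\psi)\neq0$). Then $\tau(\phi)=0\oplus\tau(\psi)\neq0$, so $\phi$ is not harmonic, while $\tau_k(\phi)=0$ by the equivalence just proved; hence $\phi$ is a proper $k$-harmonic map.

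The only genuinely non-routine point is the claim that the rough Laplacian and the curvature term respect the splitting of $\phi^{-1}T(N_1\times N_2)$; this is where I would spend the care. It rests on the facts that the Levi-Civita connection of a Riemannian product is the direct sum of the two factor connections, that pullback and direct sum of connections commute, and that the projections onto the two factors are parallel sections of $\mathrm{End}(\phi^{-1}T(N_1\times N_2))$ — so $\onabla$, and therefore $\olapla=\onabla^*\onabla$, act diagonally. Once that is in place, everything else is bookkeeping with orthogonal decompositions, and the proof is complete.
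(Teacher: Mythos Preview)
Your argument is correct and follows essentially the same route as the paper: both proofs rest on the fact that the pullback bundle, connection, rough Laplacian, and curvature term all split along the product decomposition, yielding $\tau_k(\phi)=\tau_k(\varphi)\oplus\tau_k(\psi)$. You additionally spell out why the splitting is $\onabla$-parallel and explicitly verify the ``proper'' clause, whereas the paper leaves both points implicit.
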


\begin{proof}
As in $\cite{ylo1}$, $\tau(\phi)=\tau(\varphi)+\tau(\psi),$
 $\overline \triangle_{\phi}\tau(\phi)=\overline \triangle_{\varphi}\tau(\varphi)+\overline \triangle_{\psi}\tau(\psi).$
 So we only notice that 
$\overline \triangle_{\varphi}\tau(\varphi)$ is tangent to $N_1$, $\overline \triangle_{\psi}\tau(\psi)$ is tangent to $N_2$.
So we have 
\begin{equation}
\begin{split}
\overline \triangle_{\phi} (\overline \triangle_{\phi}\tau(\phi))
&=\overline \triangle_{\phi}(\overline \triangle_{\varphi}\tau(\varphi)+\overline \triangle_{\psi}\tau(\psi))\\
&=\overline \triangle_{\varphi}(\overline \triangle_{\varphi}\tau(\varphi))
+\overline \triangle_{\psi}(\overline \triangle_{\psi}\tau(\psi)).
\end{split}
\end{equation}
Simillary,
\begin{align*}
\overline \triangle^{(k-2)}_{\phi}\tau(\phi)
=\overline \triangle^{(k-2)}_{\varphi}\tau(\varphi)
+\overline \triangle^{(k-2)}_{\psi}\tau(\psi).
\end{align*}
We use the property of the curvature of the product manifold to have
\begin{align*}
&R^{N_1\times N_2}(d\phi(e_i),\overline \triangle ^{(k-2)}_{\phi}\tau(\phi))d\phi(e_i)\\
=&R^{N_1}(d\varphi(e_i),\overline \triangle^{(k-2)}_{\varphi}\tau(\varphi))d\varphi(e_i)
+R^{N_2}(d\psi(e_i),\overline \triangle^{(k-2)}_{\psi}\tau(\psi))d\psi(e_i).
\end{align*}
Therefore, we have
\begin{equation}
\begin{split}
\tau _k(\phi)
&=\overline \triangle^{(k-1)}_{\phi}\tau(\phi)
+R^{N_1\times N_2}(d\phi(e_i),\overline \triangle ^{(k-2)}_{\phi}\tau(\phi))d\phi(e_i)\\
&=\overline \triangle^{(k-1)}_{\varphi}\tau(\varphi)
+R^{N_1}(d\varphi(e_i),\overline \triangle^{(k-2)}_{\varphi}\tau(\varphi))d\varphi(e_i)\\
&+\overline \triangle^{(k-1)}_{\psi}\tau(\psi)
+R^{N_2}(d\psi(e_i),\overline \triangle^{(k-2)}_{\psi}\tau(\psi))d\psi(e_i)\\
&=\tau_k(\varphi)+\tau_k(\psi).
\end{split}
\end{equation}
\end{proof}

\vspace{10pt}
The following corollary generalizes Corollary 3.4 in \cite{ylo1}.

\begin{cor}
Let $\psi:(M,g)\rightarrow (N,h)$ be a smooth map. Then, the graph 
$\p:(M,g)\rightarrow (M\times N,g\times h)$ with $\p(x)=(x,\psi(x))$ is a $k$-harmonic map 
 if and only if the map $\psi:(M,g)\rightarrow (N,h)$ is a $k$-harmonic map.
 Furthermore, if $\psi$ is proper $k$-harmonic, then so is the graph.  
\end{cor}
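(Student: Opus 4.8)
The plan is to deduce the corollary as a direct special case of Theorem~\ref{ps} applied to the two maps $\varphi=\mathrm{id}_M:(M,g)\rightarrow(M,g)$ and $\psi:(M,g)\rightarrow(N,h)$, whose product-type map is precisely the graph $\p(x)=(x,\psi(x))$ landing in $(M\times N,g\times h)$. The first step is to observe that the identity map is harmonic, hence $k$-harmonic; indeed $\tau(\mathrm{id}_M)=0$, so $\tau_k(\mathrm{id}_M)=0$ by the formula in Theorem~\ref{kharmonic}.

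Next I would invoke Theorem~\ref{ps} with $N_1=M$, $N_2=N$, $\varphi=\mathrm{id}_M$: it asserts that $\p$ is $k$-harmonic if and only if both $\varphi$ and $\psi$ are $k$-harmonic. Since $\varphi=\mathrm{id}_M$ is automatically $k$-harmonic, the condition collapses to ``$\psi$ is $k$-harmonic'', which gives the stated equivalence. For the ``furthermore'' clause, the hypothesis that $\psi$ is \emph{proper} $k$-harmonic together with $\varphi$ being harmonic puts us exactly in the second assertion of Theorem~\ref{ps}: one factor harmonic, the other a proper $k$-harmonic map, whence $\p$ is a proper $k$-harmonic map. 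So the proof is essentially a two-line specialization.

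There is no real obstacle here; the only point that deserves a word is the identification of the graph with the product-type map of Theorem~\ref{ps}, i.e. that $x\mapsto(x,\psi(x))$ is literally the map $\phi(x)=(\varphi(x),\psi(x))$ for $\varphi=\mathrm{id}_M$, and that the product metric $g\times h$ on $M\times N$ is the one used there. Thus the write-up would read roughly:

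\begin{proof}
Apply Theorem~\ref{ps} with $N_1=M$, $h_1=g$, $N_2=N$, $h_2=h$, $\varphi=\mathrm{id}_M$, and the given $\psi$. Then $\phi(x)=(\varphi(x),\psi(x))=(x,\psi(x))$ is the graph. Since $\tau(\mathrm{id}_M)=0$, we have $\tau_k(\mathrm{id}_M)=0$, so $\varphi$ is $k$-harmonic (indeed harmonic). By Theorem~\ref{ps}, $\p$ is $k$-harmonic if and only if both $\varphi$ and $\psi$ are $k$-harmonic, i.e. if and only if $\psi$ is $k$-harmonic. Moreover, if $\psi$ is proper $k$-harmonic, then, as $\varphi$ is harmonic, the second assertion of Theorem~\ref{ps} shows $\p$ is a proper $k$-harmonic map.
\end{proof}
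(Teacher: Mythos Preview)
Your proof is correct and follows exactly the paper's approach: the paper's proof is the one-line remark that the corollary follows from Theorem~\ref{ps} by taking $\varphi$ to be the identity map, which is harmonic. Your write-up simply makes explicit the details (including the ``furthermore'' clause) that the paper leaves implicit.
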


\begin{proof}
This follows from Theorem \ref{ps} with $\varphi:(M,g)\rightarrow (N,h)$ being 
 identity map which is harmonic.
\end{proof}

\vspace{10pt}

\begin{ex}
Let $\phi:\mathbb{R}\rightarrow \mathbb{R}\times S^n$ be a smooth curve parametrized by the 
 arc length in the product space $\mathbb{R}\times S^n$ with the standard
 product metric defined by 
 $$\p(x)=(x,\cos(\sqrt{2}x)c_1+\sin(\sqrt{2}x)c_2+c_4),$$
 where $c_1,\ c_2$ and $c_4$ are constant vectors in $\mathbb{R}^{n+1}$ 
 orthogonal to each other with $|c_1|^2=|c_2|^2=|c_4|^2=\frac{1}{2}$
 as in Example \ref{ex rcsmco1}. Then, $\p:\mathbb{R}\rightarrow \mathbb{R}\times S^n$ is $k$-harmonic for
 $k=2,3$ and $4$.
\end{ex}



\section{Determination of $k$-harmonic curves into a sphere}\label{sphere}

We determine that the ODEs of the $3$-harmonic, and $4$-harmonic curve equations into a sphere, respectively.


\begin{thm}\label{4}
Let $\gamma:I\rightarrow S^n\subset \mathbb{R}^{n+1}$ be a smooth curve defined on an interval of $\mathbb{R}$ parametrized by arc length.
 Then,  
$\gamma$ is $k$-harmonic curve if and only if
\begin{align*}
(\nabla _{\gamma '}\nabla _{\gamma '})^{(k-1)}&(\nabla_{\gamma'}\gamma')
+(\nabla _{\gamma '}\nabla _{\gamma '})^{(k-2)}(\nabla_{\gamma'}\gamma')\\
-&g\left(
(\nabla _{\gamma '}\nabla _{\gamma '})^{(k-2)}(\nabla_{\gamma'}\gamma'), \gamma' \right)
\gamma'=0,
\end{align*}
where $g$ is the standard Riemannian metric on $S^n$ of constant sectional curvature $1$,
we denote by $\gamma'$ the differential of $\gamma$ with respect to the arc length.
\end{thm}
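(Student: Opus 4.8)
The plan is to specialize the Euler--Lagrange equation
\begin{align*}
\tau_k(\gamma)=\olapla^{(k-1)}\tau(\gamma)-\mathscr{R}\bigl(\olapla^{(k-2)}\tau(\gamma)\bigr)=0,\qquad \mathscr{R}(W)=\sum_{i}R^{S^n}\bigl(W,d\gamma(e_i)\bigr)d\gamma(e_i),
\end{align*}
which by Theorem \ref{kharmonic} (together with the fundamental lemma of the calculus of variations) characterizes $k$-harmonicity, to a one-dimensional domain and to the round sphere, and then to plug in its constant curvature tensor.

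First I would record the one-dimensional data. Since $\gamma$ is parametrized by arc length, $e_1:=\gamma'$ is a global orthonormal frame on $I$, so $d\gamma(e_1)=\gamma'$; moreover $\nabla^{TI}_{e_1}e_1=0$ because $I\subset\mathbb{R}$ carries the flat metric. Hence $\tau(\gamma)=(\wnabla_{e_1}d\gamma)(e_1)=\nabla_{\gamma'}\gamma'$, where $\nabla$ now denotes the pulled-back Levi--Civita connection along $\gamma$, and the rough Laplacian on $\gamma^{-1}TS^n$ collapses to $\olapla=-\nabla_{\gamma'}\nabla_{\gamma'}$. Iterating, $\olapla^{(j)}\tau(\gamma)=(-1)^{j}(\nabla_{\gamma'}\nabla_{\gamma'})^{(j)}(\nabla_{\gamma'}\gamma')$ for every $j\ge 0$, which yields explicit expressions for both $\olapla^{(k-1)}\tau(\gamma)$ and $\olapla^{(k-2)}\tau(\gamma)$.

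Next I would evaluate the curvature term. As $S^n$ has constant sectional curvature $1$, $R^{S^n}(X,Y)Z=g(Y,Z)X-g(X,Z)Y$, so for any section $W$ along $\gamma$ we get $\mathscr{R}(W)=R^{S^n}(W,\gamma')\gamma'=W-g(W,\gamma')\gamma'$, using $|\gamma'|\equiv 1$. Substituting $W=\olapla^{(k-2)}\tau(\gamma)$ together with the formulas of the previous step, the equation $\tau_k(\gamma)=0$ becomes
\begin{align*}
(-1)^{k-1}(\nabla_{\gamma'}\nabla_{\gamma'})^{(k-1)}(\nabla_{\gamma'}\gamma')-(-1)^{k-2}\Bigl[(\nabla_{\gamma'}\nabla_{\gamma'})^{(k-2)}(\nabla_{\gamma'}\gamma')-g\bigl((\nabla_{\gamma'}\nabla_{\gamma'})^{(k-2)}(\nabla_{\gamma'}\gamma'),\gamma'\bigr)\gamma'\Bigr]=0.
\end{align*}
Since $(-1)^{k-2}=-(-1)^{k-1}$, the common factor $(-1)^{k-1}$ divides out, leaving exactly the ODE in the statement.

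I do not expect a genuine obstacle here; the proof is essentially bookkeeping. The two points that require a little care are the vanishing $\nabla^{TI}_{e_1}e_1=0$ on the interval --- this is what collapses $\olapla$ to $-\nabla_{\gamma'}^{2}$ and produces the iterated operator $(\nabla_{\gamma'}\nabla_{\gamma'})^{(j)}$ --- and keeping the signs $(-1)^{j}$ consistent so that the overall factor $(-1)^{k-1}$ really cancels out. One may additionally note that $g(\nabla_{\gamma'}\gamma',\gamma')=\tfrac12(|\gamma'|^{2})'=0$, which simplifies the formula in low degrees but is not needed for the general statement.
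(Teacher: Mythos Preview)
Your proof is correct and follows essentially the same route as the paper: specialize the Euler--Lagrange equation $\tau_k(\gamma)=\olapla^{(k-1)}\tau(\gamma)-\mathscr{R}(\olapla^{(k-2)}\tau(\gamma))=0$ to a one-dimensional domain, use $\tau(\gamma)=\nabla_{\gamma'}\gamma'$, $\olapla=-\nabla_{\gamma'}\nabla_{\gamma'}$, and $\mathscr{R}(V)=V-g(V,\gamma')\gamma'$ from the constant-curvature formula, then cancel the common sign $(-1)^{k-1}$. The paper's own argument is identical in substance, only more terse about the sign bookkeeping and the vanishing of $\nabla^{TI}_{e_1}e_1$.
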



\begin{proof}
$\gamma $ is a $k$-harmonic curve if and  only if
$$\tau _k(\gamma):=\overline \triangle \left (\overline \triangle ^{(k-2)}\tau (\gamma )\right)
-\mathscr{R} \left (\overline \triangle ^{(k-2)}\tau (\gamma)\right)=0.$$
Now, we have 
$\tau (\gamma )
=\nabla _{\gamma '}\gamma ',$
$\mathscr{R}(V)
=V-g\left (V,\gamma '\right)\gamma ',$
and 
 $\overline \triangle _\gamma V
=-\nabla _{\gamma '}\nabla _{\gamma '}V,$
 respectively for all
 $V\in \Gamma (\gamma ^{-1} TS^n).$
By 
$$\overline \triangle ^{(k-2)}V=(-1)^{(k-2)}(\nabla _{\gamma '}\nabla _{\gamma '})^{(k-2)}V,$$
 we have Theorem \ref{4}.
\end{proof}

\vspace{10pt}

\begin{prop}\label{5}
Let $\gamma :I\rightarrow S^n\subset \mathbb{R}^{n+1}$ be a smooth curve parametrized by arc length. 
Then $\gamma$ is $3$-harmonic curve if and only if 
\begin{equation}\label{3harmonic}
\begin{split}
\gamma ^{(6)}+2\gamma ^{(4)}
+(2-g_{22})\gamma ''
-4g_{23}\gamma '
+(2-3g_{22}-9g_{24}-8g_{33})\gamma =0,
\end{split}
\end{equation}
where $g_{ij}=g_0(\gamma ^{(i)},\gamma ^{(j)}),(i,j=0,1\dots),$
 and $g_0$ is the standard metric on the Euclidean space $\mathbb{R}^{n+1}$.
\end{prop}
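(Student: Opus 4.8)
The plan is to specialize Theorem \ref{4} to $k=3$ and then convert the resulting intrinsic equation on $S^n$ into ordinary derivatives in the ambient space $\mathbb{R}^{n+1}$. By Theorem \ref{4}, $\gamma$ is a $3$-harmonic curve if and only if
\begin{equation*}
(\nabla _{\gamma '}\nabla _{\gamma '})^{(2)}(\nabla_{\gamma'}\gamma')
+(\nabla _{\gamma '}\nabla _{\gamma '})^{(1)}(\nabla_{\gamma'}\gamma')
-g\big((\nabla _{\gamma '}\nabla _{\gamma '})^{(1)}(\nabla_{\gamma'}\gamma'),\gamma'\big)\gamma'=0,
\end{equation*}
so the whole task is to evaluate this left-hand side explicitly. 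The basic tool is the Gauss formula for $S^n\subset\mathbb{R}^{n+1}$: since the position vector $\gamma$ is the outward unit normal and the second fundamental form is $-g_0(\cdot,\cdot)\gamma$, for every vector field $V$ along $\gamma$ tangent to $S^n$ one has $\nabla_{\gamma'}V=V'+g_0(\gamma',V)\gamma$, where $'$ denotes differentiation in $\mathbb{R}^{n+1}$. In particular $\tau(\gamma)=\nabla_{\gamma'}\gamma'=\gamma''+\gamma$, since $g_{11}=1$.

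First I would record the scalar identities obtained by differentiating the two constraints $g_0(\gamma,\gamma)=1$ and $g_0(\gamma',\gamma')=1$ repeatedly. These yield $g_{01}=0$, $g_{02}=-1$, $g_{12}=0$, and then, successively, $g_{13}=-g_{22}$, $g_{14}=-3g_{23}$, $g_{15}=-3g_{33}-4g_{24}$, together with the derivative rules $g_{22}'=2g_{23}$, $g_{23}'=g_{33}+g_{24}$, and the normal-component identities $g_0(\gamma''',\gamma)=0$, $g_0(\gamma^{(4)},\gamma)=g_{22}$ (which guarantee that each iterate below stays tangent to $S^n$). These relations are exactly what is needed to re-absorb the stray terms produced when one differentiates the non-constant coefficients.

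With these in hand I would compute, one application of the Gauss formula at a time,
\begin{align*}
\nabla_{\gamma'}\tau(\gamma)&=\gamma'''+\gamma',\\
(\nabla _{\gamma '}\nabla _{\gamma '})^{(1)}(\nabla_{\gamma'}\gamma')&=\gamma^{(4)}+\gamma''+(1-g_{22})\gamma,\\
(\nabla _{\gamma '}\nabla _{\gamma '})^{(2)}(\nabla_{\gamma'}\gamma')&=\gamma^{(6)}+\gamma^{(4)}+(1-g_{22})\gamma''-7g_{23}\gamma'\\
&\qquad{}+(1-2g_{22}-9g_{24}-8g_{33})\gamma,
\end{align*}
each line obtained from the previous one by differentiating and simplifying with the scalar identities above. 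Finally, since $g$ agrees with $g_0$ on vectors tangent to $S^n$, one gets $g\big((\nabla _{\gamma '}\nabla _{\gamma '})^{(1)}(\nabla_{\gamma'}\gamma'),\gamma'\big)=g_{14}+g_{12}=-3g_{23}$. Substituting the three expressions above into the $k=3$ characterization and collecting the coefficients of $\gamma^{(6)},\gamma^{(4)},\gamma'',\gamma',\gamma$ then produces exactly $(\ref{3harmonic})$.

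The only genuine work is the iterated differentiation in the displayed list: one must stay disciplined about the fact that the $g_{ij}$ are functions of the parameter, so each covariant derivative simultaneously raises the order of the $\gamma^{(j)}$ appearing and generates $g_{ij}'$-terms that must be rewritten via the scalar relations before one can collapse back to the claimed normal form. I expect that bookkeeping — together with keeping straight the signs in the Gauss formula and in $g_{13}=-g_{22}$, $g_{14}=-3g_{23}$, $g_{15}=-3g_{33}-4g_{24}$ — to be the crux, rather than any conceptual point.
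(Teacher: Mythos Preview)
Your proposal is correct and follows essentially the same route as the paper: apply Theorem \ref{4} with $k=3$, use the Gauss formula to get $\nabla_{\gamma'}\gamma'=\gamma''+\gamma$, iterate to compute $(\nabla_{\gamma'}\nabla_{\gamma'})^{1,2}(\nabla_{\gamma'}\gamma')$, and simplify with the identities $g_{13}=-g_{22}$, $g_{14}=-3g_{23}$, $g_{15}=-3g_{33}-4g_{24}$. The only cosmetic difference is that the paper leaves the intermediate expressions in terms of $g_{13},g_{14},g_{15}$ and substitutes at the end, whereas you substitute along the way; the resulting formulas agree term by term.
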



\begin{proof}
$$\nabla ^0_{\gamma '} \gamma '=\sigma (\gamma ',\gamma ')+\nabla _{\gamma '} \gamma ',$$
which yields that
$$\nabla _{\gamma '} \gamma '=\nabla ^0_{\gamma '} \gamma '
+g(\gamma ',\gamma ')\gamma.$$
 Therefore, we have
$\nabla _{\gamma '}\gamma '
=\gamma ''+\gamma .$
 Similarly,
\begin{align}
(\nabla _{\gamma '}\nabla _{\gamma '})(\nabla _{\gamma '}\gamma ')\notag
&=\gamma ^{(4)}+\gamma ''+(g_{13}+1)\gamma .
\end{align}
\begin{equation}\label{nabla2}
\begin{split}
(\nabla _{\gamma '}\nabla _{\gamma '})^2&(\nabla _{\gamma '}\gamma ')\\
&=\gamma ^{(6)}+\gamma ^{(4)}+(g_{13}+1)\gamma ''
+(2g_{23}+3g_{14})\gamma '\\
&\hspace{30pt} +(1+g_{33}+3g_{24}+3g_{15}+g_{22}+3g_{13})\gamma.
\end{split}
\end{equation}
Here, we used
$g_{13}=-g_{22}\ ,\ g_{14}=-3g_{23}\ ,\ g_{15}=-3g_{33}-4g_{24}$.
So we have Proposition {\ref{5}}.
\end{proof}

\vspace{10pt}

\begin{prop}\label{6}
Let $\gamma:I\rightarrow S^n\subset \mathbb{R}^{n+1}$ be a smooth curve parametrized by the arc length. 
Then, $\gamma$ is $4$-harmonic curve if and only if
\begin{equation}\label{4harmonic}
\begin{split}
\gamma^{(8)}&+2\gamma^{(6)}+(2-g_{22})\gamma^{(4)}-11g_{23}\gamma^{(3)}\\
+&(-24g_{33}-25g_{24}+2-3g_{22})\gamma''\\
+&(19g_{34}+20g_{25}+9g_{16}-13g_{23})\gamma'\\
+&(5g_{44}+11g_{35}+10g_{26}+5g_{17}-40g_{33}-43g_{24}+g_{22}^2-5g_{22}+2)\gamma\\
=&0,
\end{split}
\end{equation}
where, $g_{ij}=g_0(\gamma ^{(i)},\gamma ^{(j)}),(i,j=0,1\dots).$
\end{prop}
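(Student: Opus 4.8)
The plan is to follow exactly the strategy used in the proof of Proposition \ref{5}, only pushing the iteration one step further. The starting point is Theorem \ref{4} with $k=4$: $\gamma$ is $4$-harmonic if and only if
\[
(\nabla_{\gamma'}\nabla_{\gamma'})^{(3)}(\nabla_{\gamma'}\gamma')
+(\nabla_{\gamma'}\nabla_{\gamma'})^{(2)}(\nabla_{\gamma'}\gamma')
-g\!\left((\nabla_{\gamma'}\nabla_{\gamma'})^{(2)}(\nabla_{\gamma'}\gamma'),\gamma'\right)\gamma'=0.
\]
So I first need to compute $(\nabla_{\gamma'}\nabla_{\gamma'})^{(3)}(\nabla_{\gamma'}\gamma')$, which is one further application of $\nabla_{\gamma'}\nabla_{\gamma'}$ to the expression $(\nabla_{\gamma'}\nabla_{\gamma'})^2(\nabla_{\gamma'}\gamma')$ already recorded in $(\ref{nabla2})$.

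The key computational device is the same one used implicitly in Proposition \ref{5}: for a vector field $W$ along $\gamma$ expressed in ambient $\mathbb{R}^{n+1}$ coordinates, the intrinsic covariant derivative on $S^n$ is $\nabla_{\gamma'}W = W' + g_0(W,\gamma')\gamma$ (the second fundamental form of $S^n\subset\mathbb{R}^{n+1}$ contributes the normal term $g_0(W,\gamma')\gamma$, since $\sigma(X,Y)=-g_0(X,Y)\gamma$ up to sign conventions matching the text). Applying this to $W=(\nabla_{\gamma'}\nabla_{\gamma'})^2(\nabla_{\gamma'}\gamma')$ from $(\ref{nabla2})$, differentiating termwise in $\mathbb{R}^{n+1}$ and adding the correction term, yields a combination of $\gamma^{(8)},\gamma^{(6)},\dots,\gamma$ with coefficients that are polynomials in the $g_{ij}=g_0(\gamma^{(i)},\gamma^{(j)})$. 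Then I substitute this together with $(\ref{nabla2})$ into the Theorem \ref{4} criterion, compute the scalar $g((\nabla_{\gamma'}\nabla_{\gamma'})^2(\nabla_{\gamma'}\gamma'),\gamma')$ by pairing $(\ref{nabla2})$ with $\gamma'$ and using $g_0(\gamma,\gamma')=0$, $g_0(\gamma',\gamma')=1$, and collect the coefficient of each $\gamma^{(j)}$.

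The final step, and the one requiring the most care, is the systematic use of the differential consequences of the constraints $g_0(\gamma,\gamma)=1$ and $g_0(\gamma',\gamma')=1$ to reduce the raw coefficients to the stated normal form. Repeatedly differentiating these two identities gives relations such as $g_{13}=-g_{22}$, $g_{14}=-3g_{23}$, $g_{15}=-3g_{33}-4g_{24}$ (already used in Proposition \ref{5}), and one needs several more of the same type — $g_{16}=-10g_{34}-5g_{25}$, $g_{17}=-15g_{44}-6g_{35}$ or the analogous identities, plus $g_{1j}$ and $g_{0j}$ relations — to eliminate all the "odd-first-index" inner products and express everything in terms of the $g_{ij}$ with $i,j\ge 2$ appearing in $(\ref{4harmonic})$. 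The main obstacle is purely bookkeeping: keeping the binomial-type coefficients straight across the two applications of $\nabla_{\gamma'}\nabla_{\gamma'}$ and then correctly applying the full list of constraint-derived substitutions so that the coefficient of $\gamma$ collapses to $5g_{44}+11g_{35}+10g_{26}+5g_{17}-40g_{33}-43g_{24}+g_{22}^2-5g_{22}+2$ — in particular the appearance of the nonlinear term $g_{22}^2$, which must come from differentiating a product involving $g_{13}=-g_{22}$. No conceptual difficulty arises beyond this; the result is a finite, if lengthy, verification.
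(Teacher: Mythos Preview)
Your proposal is correct and follows essentially the same route as the paper: compute $(\nabla_{\gamma'}\nabla_{\gamma'})^{3}(\nabla_{\gamma'}\gamma')$ by one further application of $\nabla_{\gamma'}\nabla_{\gamma'}$ to $(\ref{nabla2})$, then combine with $(\ref{nabla2})$ via Theorem~\ref{4} and simplify using the derivative relations coming from $g_0(\gamma,\gamma)=1$, $g_0(\gamma',\gamma')=1$. One small correction to your description of the target: the stated equation $(\ref{4harmonic})$ does \emph{not} eliminate all $g_{1j}$ --- the terms $g_{16}$ and $g_{17}$ remain in the coefficients of $\gamma'$ and $\gamma$ --- so you should not push the substitutions that far; also your tentative identity for $g_{17}$ is off (the correct one is $g_{17}=-10g_{44}-15g_{35}-6g_{26}$), but since you flagged this as provisional it does not affect the plan.
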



\begin{proof}
We calculate $(\nabla_{\gamma'}\nabla_{\gamma'})^3(\nabla_{\gamma'}\gamma')$ as follows. 
\begin{align*}
&(\nabla_{\gamma'}\nabla_{\gamma'})^3(\nabla_{\gamma'}\gamma')\\
&=\gamma^{(8)}+\gamma^{(6)}+(g_{13}+1)\gamma^{(4)}+(4g_{23}+5g_{14})\gamma^{(3)}\\
&+(6g_{33}+15g_{24}+10g_{15}+1+g_{22}+3g_{13})\gamma''\\
&+(19g_{34}+20g_{25}+10g_{16}+12g_{23}+10g_{14})\gamma'\\
&+(5g_{44}+11g_{35}+10g_{26}+5g_{17}+10g_{33}+22g_{24}+14g_{15}+g^2_{13}+4g_{13}+1+g_{22})\gamma.
\end{align*}
By using $(\ref{nabla2})$, we have Proposition \ref{6}.
\end{proof}



\vspace{10pt}

We can derive the ODE to be $3$-harmonic or $4$-harmonic in terms of the Frenet-frame.
 Indeed, the Frenet-frame is given as follows:
\begin{equation}
\begin{cases}
&\gamma '=T\ ,\ \nabla^N _{\gamma '}T=\kappa N\ ,\ \nabla^N_{\gamma '}N=-\kappa T,\\
&\langle T,N \rangle =0\ ,\ 
\langle T,T \rangle =1\ ,\ 
\langle N,N \rangle =1,
\end{cases}
\end{equation}
where $\kappa$ is the geodesic curvature and $\langle\cdot,\cdot\rangle=g$ the standard Riemannian metric on $S^2$.  
 Then, we have the following.

\vspace{10pt}

\begin{prop}\label{8}
Let $\gamma:I\rightarrow (S^2,\langle\cdot,\cdot\rangle)$ be a smooth curve parametrized by the arc length.
Then, $\gamma $ is a $3$-harmonic curve if and only if
\begin{equation*}
\begin{cases}
\kappa^{(4)}-12(\kappa')^2-10\kappa^2\kappa''+\kappa^5-3\kappa(\kappa')^2
+\kappa''-\kappa^3=0,\\
\kappa\kappa^{(3)}-2\kappa^3\kappa'+2\kappa'\kappa''=0,
\end{cases}
\end{equation*}
where $\kappa $ is the geodesic curvature of $\gamma$.
\end{prop}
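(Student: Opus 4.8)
The plan is to specialize the curve equation from Theorem~\ref{4} (with $k=3$) to the case $n=2$ and rewrite everything in terms of the geodesic curvature $\kappa$ using the Frenet frame $\{T,N\}$. Since $S^2$ is $2$-dimensional, any vector field along $\gamma$ orthogonal to $T$ must be a multiple of $N$, so the vector equation $\tau_3(\gamma)=0$ should decompose into exactly two scalar equations --- the $T$-component and the $N$-component --- which is precisely the shape of the claimed system.

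First I would compute the iterated covariant derivatives $\nabla_{\gamma'}\gamma' = \kappa N$, then $(\nabla_{\gamma'}\nabla_{\gamma'})(\nabla_{\gamma'}\gamma')$ and $(\nabla_{\gamma'}\nabla_{\gamla'}\nabla_{\gamma'})^{(2)}(\nabla_{\gamma'}\gamma')$ step by step, repeatedly applying the Frenet relations $\nabla^N_{\gamma'}T=\kappa N$, $\nabla^N_{\gamma'}N=-\kappa T$ and the product rule. Each application of $\nabla_{\gamma'}\nabla_{\gamma'}$ raises the order of differentiation of $\kappa$ by two and mixes the $T$ and $N$ coefficients in a controlled way; the coefficients will be polynomials in $\kappa,\kappa',\kappa'',\dots$. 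Then by Theorem~\ref{4} with $k=3$ the $3$-harmonic equation is
\begin{align*}
(\nabla_{\gamma'}\nabla_{\gamma'})^{2}(\nabla_{\gamma'}\gamma')
+(\nabla_{\gamma'}\nabla_{\gamma'})(\nabla_{\gamma'}\gamma')
-g\!\left((\nabla_{\gamma'}\nabla_{\gamma'})(\nabla_{\gamma'}\gamma'),\gamma'\right)\gamma'=0,
\end{align*}
so I would substitute the computed expressions, collect the coefficient of $N$ and the coefficient of $T$ separately, and read off the two scalar ODEs. The subtraction of the $g(\,\cdot\,,\gamma')\gamma'$ term cancels the $T$-part coming from the first summand, which is why the $T$-equation ends up being the lower-order relation $\kappa\kappa^{(3)}-2\kappa^3\kappa'+2\kappa'\kappa''=0$ rather than something of order four.

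The main obstacle is purely computational bookkeeping: correctly tracking the $T$- and $N$-coefficients through two applications of the second-order operator, where cross terms like $\nabla_{\gamma'}(\kappa'' N) = \kappa''' N - \kappa''\kappa T$ proliferate, and making sure no term is dropped when the $-\mathscr R$ correction is applied. A useful consistency check is to compare with Proposition~\ref{5}: the $N$-component should be equivalent, after using $\langle\gamma',\gamma'\rangle=1$ and its derivatives to eliminate the $g_{ij}$, to the scalar reduction of \eqref{3harmonic}, and the $T$-component should be the identity that makes that reduction consistent. Once both scalar equations are extracted and simplified, the proof is complete.
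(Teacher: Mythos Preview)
Your proposal is correct and follows essentially the same route as the paper: compute $(\nabla^N_{\gamma'}\nabla^N_{\gamma'})^2\tau(\gamma)$ in the Frenet frame, plug into the $k=3$ case of Theorem~\ref{4}, and read off the $N$- and $T$-components. One small correction: the projection term $-g\!\left((\nabla_{\gamma'}\nabla_{\gamma'})(\nabla_{\gamma'}\gamma'),\gamma'\right)\gamma'$ cancels the $T$-part of the \emph{second} summand $(\nabla_{\gamma'}\nabla_{\gamma'})(\nabla_{\gamma'}\gamma')$, not the first, so what survives in the $T$-direction is exactly $-5\kappa\kappa^{(3)}+10\kappa^3\kappa'-10\kappa'\kappa''$ from $(\nabla_{\gamma'}\nabla_{\gamma'})^2\tau(\gamma)$, which after dividing by $-5$ gives the stated third-order equation.
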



\begin{proof}
We calculate
 $(\nabla^N_{\gamma'}\nabla^N_{\gamma'})^2\tau(\gamma)$ as follows.
\begin{equation}\label{nabla22}
\begin{split}
(\nabla^N_{\gamma'}\nabla^N_{\gamma'})^2\tau(\gamma)
&=(\kappa^{(4)}-12(\kappa')^2-10\kappa^2\kappa''+\kappa^5-3\kappa(\kappa')^2)N\\
&\ \ +(-5\kappa\kappa^{(3)}+10\kappa^3\kappa'-10\kappa'\kappa'')T.
\end{split}
\end{equation}
Therefore, $\gamma$ is $3$-harmonic if and only if
\begin{equation}
\begin{split}
&(\kappa^{(4)}-12(\kappa')^2-10\kappa^2\kappa''+\kappa^5-3\kappa(\kappa')^2+\kappa''-\kappa^3)N\\
&+(-5\kappa\kappa^{(3)}+10\kappa^3\kappa'-10\kappa'\kappa'')T=0.
\end{split}
\end{equation}
So we have Proposition \ref{8}.
\end{proof}

\vspace{10pt}

\begin{prop}\label{9}
Let $\gamma:I\rightarrow (S^2,\langle\cdot,\cdot\rangle)$ be a smooth curve parametrized by the arc length.
Then, $\gamma $ is $4$-harmonic curve if and only if
\begin{equation*}
\begin{cases}
&\kappa^{(6)}-24(\kappa'')^2-24\kappa'\kappa'''-45(\kappa')^2\kappa''-46\kappa(\kappa'')^2-81\kappa\kappa'\kappa^{(3)}-21\kappa^2\kappa^{(4)}\\
&+93\kappa^3(\kappa')^2+35\kappa^4\kappa''+12\kappa^2(\kappa')^2-\kappa^7\\
&+(\kappa^{(4)}-12(\kappa')^2-10\kappa^2\kappa''+\kappa^5-3\kappa(\kappa')^2)=0,\\
&-7\kappa\kappa^{(5)}+48\kappa\kappa'\kappa''+162\kappa^2\kappa'\kappa''+35\kappa^3\kappa^{(3)}-21\kappa^5\kappa'+69\kappa(\kappa')^3\\
&-21\kappa'\kappa^{(4)}+12(\kappa')^3-35\kappa''\kappa^{(3)})=0,
\end{cases}
\end{equation*}
where $\kappa $ is the geodesic curvature of $\gamma$.
\end{prop}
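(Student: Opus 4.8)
The plan is to specialize Theorem \ref{4} with $k=4$, $n=2$, and compute everything in the Frenet frame $\{T,N\}$ on $S^2$. By Theorem \ref{4}, $\gamma$ is $4$-harmonic if and only if
\begin{align*}
(\nabla_{\gamma'}\nabla_{\gamma'})^{3}(\nabla_{\gamma'}\gamma')
+(\nabla_{\gamma'}\nabla_{\gamma'})^{2}(\nabla_{\gamma'}\gamma')
-g\!\left((\nabla_{\gamma'}\nabla_{\gamma'})^{2}(\nabla_{\gamma'}\gamma'),\gamma'\right)\gamma'=0.
\end{align*}
Since $\tau(\gamma)=\nabla_{\gamma'}\gamma'=\kappa N$, the quantity $(\nabla^N_{\gamma'}\nabla^N_{\gamma'})^{2}\tau(\gamma)$ is already recorded in $(\ref{nabla22})$ of the proof of Proposition \ref{8}. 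So the first concrete step is to apply $\nabla^N_{\gamma'}\nabla^N_{\gamma'}$ once more to the right-hand side of $(\ref{nabla22})$, using repeatedly the Frenet relations $\nabla^N_{\gamma'}T=\kappa N$ and $\nabla^N_{\gamma'}N=-\kappa T$ together with the Leibniz rule, to obtain an expression
\begin{align*}
(\nabla^N_{\gamma'}\nabla^N_{\gamma'})^{3}\tau(\gamma)=A\,N+B\,T,
\end{align*}
where $A$ and $B$ are explicit differential polynomials in $\kappa$. This is the bulk of the computation and the main obstacle: each application of $\nabla^N_{\gamma'}$ roughly doubles the number of terms, and keeping track of the coefficients (e.g. the $-24(\kappa'')^2$, $-81\kappa\kappa'\kappa^{(3)}$, $93\kappa^3(\kappa')^2$ contributions) through three levels of differentiation requires care. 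I would organize it by differentiating the $N$-component and the $T$-component of $(\ref{nabla22})$ separately and then recombining.

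Once $A$ and $B$ are in hand, I add the $N$-component from $(\ref{nabla22})$, namely $\kappa^{(4)}-12(\kappa')^2-10\kappa^2\kappa''+\kappa^5-3\kappa(\kappa')^2$, to $A$ (this is the second term $(\nabla_{\gamma'}\nabla_{\gamma'})^{2}(\nabla_{\gamma'}\gamma')$ in the displayed equation above). For the third term, I note that $g\!\left((\nabla^N_{\gamma'}\nabla^N_{\gamma'})^{2}\tau(\gamma),\gamma'\right)=g\!\left((\nabla^N_{\gamma'}\nabla^N_{\gamma'})^{2}\tau(\gamma),T\right)$ picks out the $T$-coefficient in $(\ref{nabla22})$, so this term contributes $-(-5\kappa\kappa^{(3)}+10\kappa^3\kappa'-10\kappa'\kappa'')\,T$ to the sum. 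Adding the $T$-component $B$ of $(\nabla^N_{\gamma'}\nabla^N_{\gamma'})^{3}\tau(\gamma)$ and the (zero) $T$-part of $(\ref{nabla22})$'s $N$-term, the full $4$-harmonic equation becomes $(\text{something})\,N+(\text{something})\,T=0$.

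Finally, since $\{T,N\}$ is an orthonormal frame along $\gamma$ in $S^2$, the vector equation splits into the vanishing of the $N$-coefficient and the $T$-coefficient separately. Collecting terms in the $N$-equation yields the first displayed equation of Proposition \ref{9} — one recognizes the tail $\kappa^{(4)}-12(\kappa')^2-10\kappa^2\kappa''+\kappa^5-3\kappa(\kappa')^2$ coming from the lower-order term — and collecting terms in the $T$-equation, after dividing through by the overall factor that appears, gives the second equation. I would double-check the computation by verifying that setting $\kappa$ constant reduces both equations consistently, and by cross-checking the lowest-order linear part against Proposition \ref{8}'s structure.
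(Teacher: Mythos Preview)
Your approach is essentially the paper's own: compute $(\nabla^N_{\gamma'}\nabla^N_{\gamma'})^{3}\tau(\gamma)$ by applying $\nabla^N_{\gamma'}\nabla^N_{\gamma'}$ to the expression in $(\ref{nabla22})$, add the $N$-part of $(\ref{nabla22})$ (the second and third terms of the $k=4$ equation combine to the $N$-projection of $(\nabla^N_{\gamma'}\nabla^N_{\gamma'})^{2}\tau(\gamma)$, exactly as you observe), and then split along $\{T,N\}$. One small correction: there is no overall common factor to divide out in the $T$-equation here --- unlike the $k=3$ case in Proposition~\ref{8}, the $T$-component $B$ already appears in the final form stated in the proposition.
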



\begin{proof}
We calculate 
 $(\nabla^N_{\gamma'}\nabla^N_{\gamma'})^3\tau(\gamma)$ as follows.
\begin{equation}
\begin{split}
&\hspace{-20pt}(\nabla^N_{\gamma'}\nabla^N_{\gamma'})^3\tau(\gamma)\\
=&(\kappa^{(6)}-24(\kappa'')^2-24\kappa'\kappa'''-45(\kappa')^2\kappa''-46\kappa(\kappa'')^2\\
&\ \ -81\kappa\kappa'\kappa^{(3)}-21\kappa^2\kappa^{(4)}
+93\kappa^3(\kappa')^2+35\kappa^4\kappa''+12\kappa^2(\kappa')^2-\kappa^7)N\\
+&(-7\kappa\kappa^{(5)}+48\kappa\kappa'\kappa''+162\kappa^2\kappa'\kappa''+35\kappa^3\kappa^{(3)}-21\kappa^5\kappa'+69\kappa(\kappa')^3\\
&\ \ -21\kappa'\kappa^{(4)}+12(\kappa')^3-35\kappa''\kappa^{(3)})T.
\end{split}
\end{equation}
Therefore, using (\ref{nabla22}), $\gamma$ is $4$-harmonic if and only if
\begin{equation}
\begin{split}
(&\kappa^{(6)}-24(\kappa'')^2-24\kappa'\kappa'''-45(\kappa')^2\kappa''-46\kappa(\kappa'')^2\\
&\ \ -81\kappa\kappa'\kappa^{(3)}-21\kappa^2\kappa^{(4)}
+93\kappa^3(\kappa')^2+35\kappa^4\kappa''+12\kappa^2(\kappa')^2-\kappa^7\\
&\ \ +(\kappa^{(4)}-12(\kappa')^2-10\kappa^2\kappa''+\kappa^5-3\kappa(\kappa')^2))N\\
+&(-7\kappa\kappa^{(5)}+48\kappa\kappa'\kappa''+162\kappa^2\kappa'\kappa''+35\kappa^3\kappa^{(3)}-21\kappa^5\kappa'+69\kappa(\kappa')^3\\
&\ \ -21\kappa'\kappa^{(4)}+12(\kappa')^3-35\kappa''\kappa^{(3)})T=0.
\end{split}
\end{equation}
So we have Proposition \ref{9}.
\end{proof}

\vspace{10pt}

We show the following Theorem \ref{10}.


\begin{thm}\label{10}
Every non-harmonic $k$-harmonic curve into $(S^2,\langle\cdot,\cdot\rangle)$ 
 whose geodesic curvature is constant is $2$-harmonic.
Namely, $\gamma:I\rightarrow (S^2,\langle\cdot,\cdot\rangle)$ a smooth curve parametrized by the arc length. 
If the geodesic curvature $\kappa$ of $\gamma$ is constant, every $k$-harmonic is $2$-harmonic.
\end{thm}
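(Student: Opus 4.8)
The plan is to show that when $\kappa$ is a nonzero constant, the defining ODEs for $k$-harmonicity (for every $k\geq 2$) reduce to the defining ODE for $2$-harmonicity, so that the curve is automatically $2$-harmonic. First I would set $\kappa'=\kappa''=\dots=0$ in the Frenet frame relations $\nabla^N_{\gamma'}T=\kappa N$, $\nabla^N_{\gamma'}N=-\kappa T$. A clean induction then shows that for constant $\kappa$ the iterated operator acts on the frame by
\begin{align*}
(\nabla^N_{\gamma'}\nabla^N_{\gamma'})^{j}T=(-\kappa^2)^{j}T,\qquad
(\nabla^N_{\gamma'}\nabla^N_{\gamma'})^{j}N=(-\kappa^2)^{j}N,
\end{align*}
and since $\tau(\gamma)=\nabla_{\gamma'}\gamma'=\kappa N$, we get $\overline\triangle^{(j)}\tau(\gamma)=(-1)^{j}(\nabla^N_{\gamma'}\nabla^N_{\gamma'})^{j}(\kappa N)=\kappa^{2j+1}N$, a constant multiple of $N$ (hence of $\tau(\gamma)$). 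In particular all the higher-order derivative terms $\kappa^{(r)}$ appearing in Propositions \ref{8} and \ref{9} vanish and one can read off directly from those ODEs, or more transparently from Theorem \ref{4}, what remains.

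Next I would plug this into the $k$-harmonic curve equation from Theorem \ref{4}. With $\overline\triangle^{(k-2)}\tau(\gamma)=\kappa^{2(k-2)+1}N$ and $\overline\triangle^{(k-1)}\tau(\gamma)=\kappa^{2(k-1)+1}N$, the condition
\begin{align*}
(\nabla_{\gamma'}\nabla_{\gamma'})^{(k-1)}(\nabla_{\gamma'}\gamma')
+(\nabla_{\gamma'}\nabla_{\gamma'})^{(k-2)}(\nabla_{\gamma'}\gamma')
-g\!\left((\nabla_{\gamma'}\nabla_{\gamma'})^{(k-2)}(\nabla_{\gamma'}\gamma'),\gamma'\right)\gamma'=0
\end{align*}
becomes $\bigl(\kappa^{2k-1}+\kappa^{2k-3}\bigr)N=0$ (the $\gamma'$-term drops out since $\langle N,T\rangle=0$), i.e. $\kappa^{2k-3}(\kappa^2+1)N=0$. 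Since $\kappa^2+1\neq0$ this forces $\kappa=0$. I would then observe that the $2$-harmonic curve equation is exactly the $k=2$ instance of the same computation, giving $\kappa^{1}(\kappa^{2}+1)N=0$, i.e. the identical condition $\kappa=0$. Hence the constant-curvature $k$-harmonic equation and the constant-curvature $2$-harmonic equation coincide (both say $\kappa=0$), and a non-harmonic constant-curvature $k$-harmonic curve would have to satisfy neither — so in fact the statement reduces to: a constant-curvature curve is $k$-harmonic iff it is harmonic iff it is $2$-harmonic.

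Wait — the theorem asserts existence of proper (non-harmonic) such curves "is $2$-harmonic," so I should double-check the algebra against Proposition \ref{5} / Proposition \ref{9} rather than trust the heuristic sign: possibly the intrinsic Laplacian sign convention $\overline\triangle_\gamma V=-\nabla_{\gamma'}\nabla_{\gamma'}V$ makes $\overline\triangle^{(j)}\tau(\gamma)=\kappa^{2j+1}N$ but the curvature term $\mathscr R$ contributes with a relative sign so that the eigenvalue relation is $\kappa^{2k-3}(\kappa^2-1)=0$, allowing the nontrivial solution $\kappa=1$ for every $k$. Either way the key point is the same: for constant $\kappa$, $\overline\triangle^{(k-2)}\tau(\gamma)$ and $\overline\triangle^{(k-1)}\tau(\gamma)$ are both scalar multiples of $\tau(\gamma)=\kappa N$ with the scalars being powers of $(-\kappa^2)$, so the $k$-harmonic equation collapses to a single polynomial equation in $\kappa$ whose nonzero roots are independent of $k$ and agree with the $k=2$ case. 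I would carry out the clean approach via Theorem \ref{4} and the eigenvector identity above, and cross-check with Propositions \ref{8} and \ref{9} specialized to $\kappa'\equiv 0$.

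The main obstacle is purely bookkeeping: getting the sign and normalization conventions ($\overline\triangle$ versus $\nabla_{\gamma'}\nabla_{\gamma'}$, and the exact form of $\mathscr R$ on $S^2$) exactly right so that the reduced polynomial in $\kappa$ really is $k$-independent and really does reproduce the $2$-harmonic equation; the induction showing $N$ is an eigenvector of $(\nabla^N_{\gamma'}\nabla^N_{\gamma'})$ with eigenvalue $-\kappa^2$ when $\kappa$ is constant is the conceptual heart and is straightforward.
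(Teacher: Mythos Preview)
Your approach is exactly the paper's: show by induction that for constant $\kappa$ one has $(\nabla^N_{\gamma'}\nabla^N_{\gamma'})^{j}\tau(\gamma)=(-1)^{j}\kappa^{2j+1}N$, substitute into the equation of Theorem~\ref{4}, and read off a polynomial in $\kappa$ whose nonzero roots are independent of $k$ and coincide with the $2$-harmonic condition $\kappa^2=1$.

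The sign wobble in your first pass has a concrete source: you correctly compute $\overline\triangle^{\,j}\tau(\gamma)=\kappa^{2j+1}N$, but Theorem~\ref{4} is stated in terms of $(\nabla_{\gamma'}\nabla_{\gamma'})^{j}=(-1)^{j}\overline\triangle^{\,j}$, so the two leading terms there carry the factors $(-1)^{k-1}$ and $(-1)^{k-2}$ respectively. Their relative sign is $-1$, and the equation collapses to
\[
(-1)^{k-2}\kappa^{2k-3}(\kappa^{2}-1)\,N=0,
\]
not $\kappa^{2k-3}(\kappa^{2}+1)N=0$. Hence for $\kappa\neq 0$ one gets $\kappa^{2}=1$ for every $k\geq 2$, which is precisely the (constant-curvature) $2$-harmonic condition the paper derives first. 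With that one sign straightened out your argument is the paper's proof verbatim; the $\mathscr{R}$-term plays no separate role here because Theorem~\ref{4} has already absorbed it into the displayed equation.
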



\begin{proof}
$\gamma$ is $2$-harmonic curve if and only if 
$$(\nabla ^N_{\gamma '}\nabla ^N_{\gamma '})\tau (\gamma)
+\tau (\gamma)
-\langle \tau (\gamma) , \gamma '\rangle \gamma '=0.$$
Now we have $\tau (\gamma)=\kappa N$
 and
 $(\nabla^N_{\gamma'}\nabla^N_{\gamma'})\tau (\gamma)
=(\kappa ''-\kappa^2)N-3\kappa \kappa'T.$
Therefore, $\gamma$ is $2$-harmonic if and only if
 $\kappa^2=1.$
Next, we consider $k$-harmonic curve. We set geodesic curvature $\kappa$ of $\gamma$ is constant.
Then, we have
\begin{align*}(\nabla^N_{\gamma'}\nabla^N_{\gamma '})^{k-2}\tau (\gamma)
&=(-1)^{k-2}\kappa ^{2(k-2)+1}N.
\end{align*}
So, $\gamma$ is $k$-harmonic curve if and only if
\begin{align*}
0
&=(-1)^{k-2}\kappa^{2(k-2)+1}(\kappa^2-1)N.
\end{align*}
So, we have Theorem \ref{10}.
\end{proof}

\vspace{10pt}

\begin{ex}[\cite{rcsmco1}]\label{ex rcsmco1}
R. Caddeo, S. Montaldo and C. Oniciuc gave following two curves are propre $2$-harmonic curves 
$\gamma:I\rightarrow S^n\subset \mathbb{R}^{n+1}$ $(cf. \cite{rcsmco1})$
 .
\begin{align}\label{rcsmco1 eg1}
\gamma(t)=\cos (\sqrt{2}t)c_1+\sin(\sqrt{2}t)c_2+c_4,
\end{align}
where $c_1$, $c_2$ and $c_4$ are constant vectors orthogonal to each other with 
$|c_1|^2=|c_2|^2=|c_4|^2=\frac{1}{2}$.
\begin{align}\label{rcsmco1 eg2}
\gamma(t)=\cos(at)c_1+\sin(at)c_2+\cos(bt)c_3+\sin(bt)c_4,
\end{align}
where $c_1$, $c_2$, $c_3$ and $c_4$ are constant vectors orthogonal to each other with 
$|c_1|^2=|c_2|^2=|c_3|^2=|c_4|^2=\frac{1}{2}$, and $a^2+b^2=2, a^2\neq b^2$.
\end{ex}

\begin{prop}\label{eg of 3,4}
The curves $(\ref{rcsmco1 eg1})$ and $(\ref{rcsmco1 eg2})$ in Example \ref{ex rcsmco1} are also $3$-harmonic and $4$-harmonic. 
\end{prop}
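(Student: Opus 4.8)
The plan is to verify that the two explicit curves from Example \ref{ex rcsmco1} satisfy the ODEs characterizing $3$-harmonic and $4$-harmonic curves into $S^n$. There are two natural routes: either plug into the Frenet-form characterizations (Propositions \ref{8} and \ref{9}), which requires knowing the geodesic curvature, or plug directly into the Euclidean-coordinate ODEs \eqref{3harmonic} and \eqref{4harmonic} from Propositions \ref{5} and \ref{6}. For the curve \eqref{rcsmco1 eg1}, which lies in $S^2$ (after suitable placement) and has constant geodesic curvature, I would simply invoke Theorem \ref{10}: since $\gamma$ is a proper $2$-harmonic curve its geodesic curvature satisfies $\kappa^2=1$, hence $\kappa$ is constant, and Theorem \ref{10} says every $k$-harmonic condition reduces to the $2$-harmonic one $\kappa^2=1$, which already holds. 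So \eqref{rcsmco1 eg1} is automatically $3$- and $4$-harmonic.

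For the curve \eqref{rcsmco1 eg2} the geodesic curvature is not constant, so Theorem \ref{10} does not apply and I would work with the coordinate ODEs. First I would record the basic derivative relations: writing $\gamma(t)=\cos(at)c_1+\sin(at)c_2+\cos(bt)c_3+\sin(bt)c_4$, one has $\gamma''=-a^2(\cos(at)c_1+\sin(at)c_2)-b^2(\cos(bt)c_3+\sin(bt)c_4)$, and more generally every even derivative is a linear combination of the ``$a$-block'' $P:=\cos(at)c_1+\sin(at)c_2$ and the ``$b$-block'' $Q:=\cos(bt)c_3+\sin(bt)c_4$ with coefficients that are powers of $-a^2$ and $-b^2$, while odd derivatives lie in the span of $P',Q'$. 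Using orthonormality of the $c_i$ with $|c_i|^2=\tfrac12$ and $a^2+b^2=2$, I would compute the Gram quantities $g_{ij}=g_0(\gamma^{(i)},\gamma^{(j)})$: they all turn out to be constants expressible in $a^2,b^2$ (e.g. $g_{11}=1$, $g_{22}=\tfrac12(a^4+b^4)$, $g_{13}=-g_{22}$, etc.), so the ODEs \eqref{3harmonic} and \eqref{4harmonic} become genuine constant-coefficient linear ODEs in $\gamma$.

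The remaining step is then purely algebraic: substitute the block decomposition $\gamma=P+Q$ (and its derivatives) into \eqref{3harmonic} and into \eqref{4harmonic}, collect the coefficient of $P$ and of $Q$ separately (they are linearly independent sections since $a^2\neq b^2$), and check that each coefficient vanishes identically once $a^2+b^2=2$ is imposed. For the $3$-harmonic ODE the $P$-coefficient is a polynomial in $a^2$ of the form $(a^2)^3 - 2(a^2)^2 + \dots$ that should factor through $a^2+b^2-2=0$; similarly for the $Q$-coefficient with $a$ and $b$ swapped. The same computation, one degree higher, handles \eqref{4harmonic}.

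The main obstacle is the bookkeeping in the last step: the coefficients in \eqref{4harmonic} involve many Gram terms $g_{44},g_{35},g_{26},g_{17},g_{33},g_{24},g_{22}$, and one must correctly express each in terms of $a^2$ and $b^2$ and then confirm the large polynomial collapses. A clean way to control this is to diagonalize from the start: since the operator $\nabla_{\gamma'}\nabla_{\gamma'}$ acts on $P$ as multiplication by $-(a^2-1)$ and on $Q$ as $-(b^2-1)$ along this curve (a consequence of the $2$-harmonic structure, where $\kappa^2$ reads off $a^2,b^2$), the iterated operators in Theorem \ref{4} act as scalars on each block, so $k$-harmonicity reduces to the scalar identity $\mu^{k-1}+\mu^{k-2}-(\text{const})=0$ on each block with $\mu=-(a^2-1)$ and $\mu=-(b^2-1)$; verifying this for $k=3,4$ using $a^2+b^2=2$ is a short computation. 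I would present the argument in this reduced form to avoid grinding through the full coordinate expansion.
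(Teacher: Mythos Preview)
The paper itself omits the proof, saying only that it follows by direct computation, so your outline is already more detailed than what appears there. Two points deserve adjustment.

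For curve \eqref{rcsmco1 eg1}, Theorem~\ref{10} as \emph{stated} gives only the implication ``constant $\kappa$ and $k$-harmonic $\Rightarrow$ $2$-harmonic''; you want the converse. What you should invoke is the computation inside its proof: for constant $\kappa$ the $k$-harmonic equation reduces to $(-1)^{k-2}\kappa^{2k-3}(\kappa^2-1)=0$, so $\kappa^2=1$ indeed forces $k$-harmonicity for every $k$.

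For curve \eqref{rcsmco1 eg2}, the diagonalization idea is right but the stated eigenvalues are not. The vectors $P$ and $Q$ are not tangent to $S^n$ (each satisfies $g_0(P,\gamma)=g_0(Q,\gamma)=\tfrac12$), so ``$\nabla_{\gamma'}\nabla_{\gamma'}$ acts on $P$ by $-(a^2-1)$'' does not literally make sense, and the block-by-block scalar identity you propose is not the correct reduction. The actual computation, using $a^2+b^2=2$, is cleaner than you suggest: one finds $\tau(\gamma)=(1-a^2)P+(1-b^2)Q=(1-a^2)(P-Q)$ is tangent, $g(\tau(\gamma),\gamma')=0$, and
\[
\nabla_{\gamma'}\nabla_{\gamma'}\,\tau(\gamma)=-\tau(\gamma),\qquad\text{hence}\qquad \overline\bigtriangleup^{\,j}\tau(\gamma)=\tau(\gamma)\quad\text{for all }j\ge 0.
\]
Substituting into Theorem~\ref{4}, the $k$-harmonic equation collapses to $(-1)^{k-2}\bigl(-\tau+\tau-g(\tau,\gamma')\gamma'\bigr)=0$, which holds for every $k$; no separate polynomial verification on each block is needed.
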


\begin{proof}
We can show this proposition by a direct computation. The proof is omitted.
\end{proof}

\begin{rem}
Notice that a $2$-harmonic map implies not always to be $3$-harmonic or $4$-harmonic. 
 Proposition \ref{eg of 3,4} is non-trivial.
\end{rem}



\vspace{30pt}

{\hspace{170pt}
\footnotesize
Graduate Schoole of Information Sciences.\\
\hspace{186pt}
TOHOKU University.\\
\hspace{186pt}
Aoba 6-3-09 Aramaki Aoba-ku\\
\hspace{186pt}
Sendai-shi Miyagi, 980-8579\\
\hspace{186pt}
Japan }
\vspace{15pt}

{\footnotesize
\hspace{170pt}
Current address:\\
\hspace{185pt}
Nakakuki 3-10-9 Oyama-shi Tochigi\\
\hspace{185pt}
Japan\\
\hspace{185pt}
e-mail:shun.maeta@gmail.com\\
\hspace{185pt}
e-mail:maeta@ims.is.tohoku.ac.jp}
\end{document}